\newtheorem{thm}{Theorem}[section]
\newtheorem{prop}[thm]{Proposition}
\newenvironment{proof_cor}{\paragraph{\textnormal{\emph{Proof of correctness.}}}}{\qed}
\theoremstyle{definition}
\newtheorem{alg}[thm]{Algorithm}
\newtheorem{defin}[thm]{Definition}
\newtheorem{rem}[thm]{Remark}
\numberwithin{equation}{section}
\newcommand{\RZ}{\mathbb Z}
\newcommand{\FQ}{\mathbb Q}
\newcommand{\iquat}{\mathcal H}
\newcommand{\QF}[1] {\FQ(\sqrt{#1})}
\newcommand{\Rideal}[1] {\langle#1\rangle_R}
\newcommand{\Lideal}[1] {\langle#1\rangle_L}
\newcommand{\ord}{\mathcal O}
\begin{document}


\baselineskip=17pt


\title{Ideal Class Group Algorithms\\in the Ring of Integral Quaternions}

\author{Anton S. Mosunov\\
St. Petersburg National Research University\\ 
of Information Technologies, Mechanics and Optics\\
49 Kronverkskiy Prospect (St. Petersburg, Russia)\\
E-mail: antonmosunov@gmail.com}
\date{}

\maketitle


\renewcommand{\thefootnote}{}

\footnote{2010 \emph{Mathematics Subject Classification}: Primary 11R52.}

\footnote{\emph{Key words and phrases}: number theory, quaternion, ideal, algorithm.}

\renewcommand{\thefootnote}{\arabic{footnote}}
\setcounter{footnote}{0}

\begin{abstract}
An ideal is a classical object of study in the field of algebraic number theory. In maximal quadratic orders of number fields, ideals usually represented by the $\RZ$-basis. This form of representation is used in most of the algorithms for ideal manipulation. However, this is not the only option, as an ideal can also be represented by its generating set. Moreover, in certain quadratic orders of the ring of integral quaternions, every ideal can be represented by a single quaternion, called a \emph{pseudo generator}.

\par In this paper, we develop algorithms which allow us to manipulate ideals using solely their pseudo generators. We also demonstrate the connection between the three squares problem and factorization. This result comes as an extension of a number factoring technique discovered by Fermat, which allows the factoring of an integer if a pair of its two squares representations is known. In addition to this, we present a number of identities, which ideals must satisfy; some computational data on the number of ambiguous classes, generated by the equation $\rho\mu = -\mu\rho$; and a new approach to the problem of finding a non-trivial divisor of a class number.
\end{abstract}

\section{Introduction}

\par In the field of algebraic number theory, an integral two-sided ideal is well known, and has been thoroughly studied over the past two centuries. In \emph{maximal quadratic orders} $\ord$ of \emph{number fields}, $\QF N$, where $N$ is a non-zero squarefree integer, there are many things known about ideals. Namely, we know that equivalence classes of ideals in $\ord$ form a group, known as the \emph{ideal class group}. If we consider this group, we know how to manipulate its elements. But there are still many more things that we don't know about ideals. For example, even though we know that the size of the ideal class group is finite, we do not have any effective algorithm for its computation. There are also various heuristics known, such as the Cohen-Lenstra heuristics \cite{cohen1, cohen2}, which give certain predictions regarding the structure of an odd part of the ideal class group. However, we still don't have an efficient algorithm to compute its structure.

\par In this paper, we suggest studying an ideal class group from a different perspective. When $N < 0$, instead of $\QF N$, we consider it in the \emph{ring of integral quaternions} $\iquat$. The reason is that $\iquat$ has many beautiful properties that $\QF N$ may not have. For example, $\iquat$ is a \emph{principal ideal domain}, which means that each ideal of $\iquat$ can be represented by a single generator. Because of this fundamental property, even though $\ord(\mu)$, a quadratic order of $\iquat$, isomorphic to $\ord$, is not a principal ideal domain, we can still represent each ideal in $\ord(\mu)$ with a unique (up to a multiplication by a unit) integral quaternion, which we call a \emph{pseudo generator}. Knowing this, we asked the following question: is it possible to take advantage of this fact, and come up with algorithms, that manipulate ideals using only their pseudo generators? Eventually it was discovered that it is possible, and we introduce these algorithms in the section \ref{sec:algorithms}.

\par Another interesting property of the ring of integral quaternions is that, unlike $\ord$ in $\QF N$, there are many quadratic orders in $\iquat$, which are isomorphic to $\ord$. The total number of quadratic orders of a fixed discriminant $\Delta$ is related to the number of representations of an integer $|\Delta|$ as a sum of three squares. In the first half of the 20th century, mathematicians Hurwitz and Venkov explored the connections, that arise between various quadratic orders in $\iquat$. In continuation of this thread, in the section \ref{sec:fermat} we demonstrate how a pair of quadratic orders can generate a non-trivial factor of $N$. In the same section, we demonstrate that the Fermat number factoring technique is a special case of the theory, introduced in this paper.

\par In addition to the two sections mentioned above, the paper is structured as follows: section 2 describes the previous work in the field. Section 3 gives background information on the theory of integral quaternions and quadratic orders in $\iquat$, which was developed by Hurwitz and Venkov. Section 6 describes three identities that ideals must satisfy. Section 7 presents some computational data regarding the number of ambiguous classes within an ideal class group, which were generated by the equation $\rho\mu = -\mu\rho$. Section 8 introduces a novel approach to the problem of finding a divisor of a class number. Finally, section 9 concludes the paper.

\section{Previous Work}

The theory of integral quaternions was first introduced by Hurwitz \cite{hurwitz}. After Hurwitz, it was the soviet mathematician Boris Venkov, who expanded the theory by studying ideals in $\iquat$ \cite{ven1, ven2, ven3, ven4}. The monographs of Voight \cite{voight} and Vign\'eras \cite{vigneras} give a thorough explanation on how ideals arise in the context of quaternion algebras.

\par In terms of algorithms, there is a \texttt{Sage} code written by D. R. Kohel in 2005 \cite{kohel}, and maintained by J. Bobber and W. Stein \cite{bobber}, which has a broad functionality on quaternion algebras, including the operations on ideals. However, this code utilizes a different and more generalized approach to ideal manipulation. In this paper, we restrict our attention to the Hamilton algebra, and we tend to operate ideals using \emph{only} their pseudo generators, without ever mentioning a $\RZ$-basis. Among our algorithms, we also describe an algorithm on \emph{ideal reduction}, which, to our best knowledge, is not present in any literature available.

\par As for other algorithms, we would like to mention a paper by Kirschmer and Voight \cite{kirschmer}. Though not related to our research, it demonstrates another application of ideals in quaternion algebras (namely algebras defined over number fields). This paper describes algorithms, which count and enumerate representatives of the ideal classes of an Eichler order in a quaternion algebra defined over a number field.

\section{Background} \label{sec:background}

In this paper, we assume that our reader is familiar with some fundamental notions of algebraic number theory, such as an \emph{imaginary number field} $\QF{-m}$ (for $m$ positive and squarefree), a \emph{maximal quadratic order} $\ord$, and a \emph{(two-sided) $\ord$-ideal}. We encourage our reader to refer to the monographs \cite[Chap. 4]{jacobson} and \cite[Sec. 1.6]{mollin} for more details.

\par In this section, we shall primarily focus on the Hurwitz theory and the connection that exists between quadratic orders in number fields and quadratic orders in the ring of integral quaternions.

Consider the Hamilton algebra, where the trio of imaginary units $i$, $j$, $k$ produces the following multiplication table:

$$
\begin{array}{c | c | c | c }
& i & j & k\\
\hline
i & -1 & k & -j\\
\hline
j & -k & -1 & i\\
\hline
k & j & -i & -1
\end{array}
$$

\par The elements of Hamilton algebra are called \emph{quaternions}, and have the form $q = t + xi + yj + zk$, where $t$, $x$, $y$, $z$ are real numbers. We shall use the standard denotations of Venkov \cite{ven1}

\arraycolsep=1.4pt\def\arraystretch{2}

$$
\begin{array}{cc}
\Re(q) = t; & N(q) = t^2 + x^2 + y^2 + z^2;\\
\overline{q} = t - xi - yj - zk; & q^{-1} = \overline{q}/N(q).
\end{array}
$$

to refer to the \emph{real part}, \emph{norm}, \emph{conjugate} and the \emph{inverse} of a quaternion. We shall also write $\vec q = xi + yj + zk$ to denote the \emph{vector part} of a quaternion $q$.

\begin{defin}
An \emph{integral quaternion} is a quaternion of the form

$$
\frac{a + bi + cj + dk}{2},
$$

where $a$, $b$, $c$, $d$ are integers that satisfy the congruence $a \equiv b \equiv c \equiv d$ (mod 2). We shall denote the set of integral quaternions by $\iquat$.
\end{defin}

\begin{defin}
An integral quaternion is \emph{primitive}, if there is no positive integer other than 1, which divides all of its coefficients $a$, $b$, $c$, $d$.
\end{defin}

Hurwitz demonstrated that $\iquat$ is a maximal non-commutative ring with identity \cite{hurwitz}. In this ring, there exist 24 units, namely

$$
\pm 1, \,\,\, \pm i, \,\,\,\pm j, \,\,\,\pm k, \,\,\,\frac{\pm 1 \pm i \pm j \pm k}{2}.
$$

Furthermore, $\iquat$ is a \emph{euclidean domain} \cite[\textsection 4, a), c)]{ven1}, and therefore for any pair of integral quaternions $q$, $r$ there exists a unique (up to a multiplication by a unit from the left (or right)) integral quaternion, which we shall denote as $\gcd_r(q,r)$ (or $\gcd_l(q,r)$), of the largest norm, which divides both $q$ and $r$ from the right (or left).

In his \nth{2} letter \cite{ven2}, Venkov formulates the connection between quadratic orders in the ring of integral quaternions and quadratic orders of imaginary number fields. Following his reasoning, we first notice that each $q \in \iquat$ satisfies the relation

\begin{equation} \label{eq:quat_relation}
q^2 - 2\Re(q)q + N(q) = 0,
\end{equation}

whence any integral quaternion $\mu$ of norm $m$ with a zero real part satisfies the equation $\mu^2 = -m$. Next, recall that quaternion $\mu$ commutes with any other quaternion of the form $X + Y\mu$, where $X, Y \in \RZ$ \cite[\textsection 2, a)]{ven2}. Therefore, the quadratic order $\ord(\mu) = [1, \omega]$, where

\arraycolsep=2pt\def\arraystretch{1}

\begin{equation} \label{eq:omega}
\omega = \frac{r - 1 + \mu}{r}, \,\,\, r=\left\{
\begin{array}{l l}
1, & m \not \equiv 3 \pmod 4,\\
2, & m \equiv 3 \pmod 4,
\end{array}
\right.
\end{equation}

is isomorphic to the maximal quadratic order $\ord$ of $\QF{-m}$ \cite[\textsection 2]{ven3}. It is important to emphasize that this relation between $\ord$ and $\ord(\mu)$ exists if and only if $m$ is not representable in the form $4^k(8n + 7)$ for some non-negative integers $k$, $n$, since there are no three squares representations for integers of this form \cite[Chap. 4, \textsection 1]{grosswald}. In the remaining parts of this paper, we shall consider only squarefree values of $m = N(\mu)$, so $m$ must satisfy the congruence $m \not \equiv 7$ (mod 8).

\par After the formation of a quadratic order $\ord(\mu)$, which is isomorphic to $\ord$, it is now possible to study various ideals that belong to $\ord(\mu)$\footnote{Note that all ideals in $\ord(\mu)$ are \emph{two-sided}, since all elements of the quadratic order commute to each other \cite[\textsection 1, a)]{ven2}.}. A natural question arises: if $\ord$ and $\ord(\mu)$ are isomorphic, why do we need to study $\ord(\mu)$? After all, it is much easier to manipulate algebraic integers than quaternions. There are two main reasonings that led to the study of quadratic orders and ideals in $\iquat$:

\begin{enumerate}[a)]
\item The ring $\iquat$ is a Euclidean domain, and therefore it is a \emph{principal ideal domain} \cite[\textsection 4, b)]{ven1}. As we are about to demonstrate, this property allows us to represent an ideal with a single right (left) \emph{pseudo generator} (to be defined), which is an integral quaternion, and is unique up to a multiplication by a unit from the left (right);

\item A maximal quadratic order $\ord$ in $\QF{-m}$ is unique, which is not true for $\iquat$. Unless $m \equiv 7$ (mod 8), there exist several solutions to the equation $\mu^2 = -m$, hence several quadratic orders, isomorphic to $\ord$. There exist a number of interesting relations between various quadratic orders, which were studied by Venkov (e.g. in \cite[\textsection 19]{ven4}), and which we will demonstrate in this paper.
\end{enumerate}

\par There are other interesting properties of ideals and quadratic orders, which arise in $\iquat$ \cite[\textsection 5, 6]{ven3}. Consider a primitive ideal $\mathfrak a$ of some fixed quadratic order $\ord(\mu)$, and let the $\RZ$-basis of $\mathfrak a$ be equal to $[a, b + \omega]$ for some integers $a$, $b$. In this case, the norm of $\mathfrak a$ is equal to $N(\mathfrak a) = a$ \cite[Def. 4.33, Prop. 4.23]{jacobson}. Since $\iquat$ is a Euclidean domain, there exist integral quaternions $\rho = \gcd_r(a, b + \omega)$, and $\rho' = \gcd_l(a, b + \omega)$. Because $\mathfrak a$ is primitive, $\rho$ and $\rho'$ must be primitive as well.

\par Following the proof of Venkov \cite[\textsection 14]{ven2}, we can demonstrate that $N(\mathfrak a) = N(\rho) = N(\rho')$. Since $a$ is divisible by $\rho$ from the right and $\rho$ is primitive, we have $a = a_0 N(\rho)$ for some positive integer $a_0$. We also have $b + \omega = \xi\rho$ for some integral quaternion $\xi$, and because $\rho$ is the greatest common divisor, quaternions $\xi$ and $a_0\overline{\rho}$ have no common divisors from the right. Since $N(b+\omega) = N(\xi)N(\rho)$ is divisible by $a = a_0N(\rho)$, $N(\xi)$ is divisible by $a_0$ \cite[Thm. 4.24]{jacobson}, and therefore $a_0 = 1$, $N(\rho) = a = N(\mathfrak a)$. Analogously, we can show that $N(\rho') = N(\mathfrak a)$.

\par Now, let us demonstrate that $\rho$ unambiguously determines an ideal $\mathfrak a$ of $\ord(\mu)$. Define a set $\Rideal \rho$ as follows:

$$
\Rideal \rho = \left\{ \xi\rho \,\,\, | \,\,\, \xi \in \iquat \,\,\,\textnormal{and}\,\,\, \xi\rho \in \ord(\mu)\right\}.
$$

We call $\rho$ a \emph{pseudo generator} of $\Rideal \rho$, and use the letter $R$ to indicate that every element of this set is divisible by $\rho$ from the right. From the definition of $\Rideal \rho$, it follows that

$$
\Rideal \rho = (\rho)_L \cap \ord(\mu),
$$

where $(\rho)_L = \left\{ \xi\rho \,\,\, | \,\,\, \xi \in \iquat\right\}$ is a left ideal, generated by $\rho$. Now, let us demonstrate that $\mathfrak a = \Rideal \rho$. First of all, notice that all the elements of $\mathfrak a = [a, b + \omega]$ are divisible by $\rho$ from the right, so $\mathfrak a \subseteq \Rideal \rho$. To show that the converse statement holds, once again, we follow the proof of Venkov \cite[\textsection 14]{ven2}. Pick $q \in \Rideal \rho$. Since $q \in \ord(\mu)$, we have $q = X + \omega Y$ for some integers $X$ and $Y$. Since $X + Y\omega - Y(b + \omega) = X - bY$ is divisible by $\rho$ from the right, we can put $X = bY + ca$ for some positive integer $c$, which means that

$$
X  + Y\omega = ca + Y(b + \omega).
$$

\par We conclude that $\mathfrak a = \Rideal \rho$. Analogously, we can define

$$
\Lideal{\rho'} = \left\{ \rho'\xi \,\,\, | \,\,\, \xi \in \iquat \,\,\,\textnormal{and}\,\,\, \rho'\xi \in \ord(\mu)\right\},
$$

and demonstrate that $\mathfrak a = \Lideal{\rho'}$. In the remaining parts of this paper, we call $\rho$ and $\rho'$ the \emph{right} and \emph{left pseudo generators} of $\mathfrak a = \Rideal \rho = \Lideal{\rho'}$, respectively. For obvious reasons, if $\mathfrak a$ is non-primitive, i.e. representable in the form $c[a, b + \omega]$ for some integer $c > 1$, then both $\rho$ and $\rho'$ will be non-primitive integral quaternions, and their greatest integral divisor will be exactly $c$.

\par The proof above allows us to make the following observation: even though an ideal $\mathfrak a$ in $\ord(\mu)$ may have multiple generators, it still has a unique right (left) pseudo generator. Note that the right (left) pseudo generator is unique up to a multiplication by a unit of $\iquat$ from the left (right), because $\Rideal{\varepsilon\rho} = \Rideal \rho$ for any unit $\varepsilon$ (analogous statement holds for $\Lideal{\rho'}$). Hereinafter, all our results will be demonstrated for right pseudo generators, but by analogy we can demonstrate that they hold for left pseudo generators as well.

\vspace{1cm}

\par We now move our attention to the next important topic. Let $\mathfrak a = [\omega_1, \omega_2]$ be an arbitrary ideal in $\ord(\mu)$. Since $\mathfrak a = \Rideal \rho$ for some integral quaternion $\rho$, there exist a pair of integral quaternions $\zeta_1$ and $\zeta_2$ s.t.

$$
\omega_1 = \zeta_1\rho, \,\,\, \omega_2 = \zeta_2\rho.
$$

\par Since $N(\mathfrak a) = N(\rho)$, we conclude that $\zeta_1$, $\zeta_2$ do not have a common divisor from the right, and therefore $\gcd_r(\omega_1\mu, \omega_2\mu) = \rho\mu$. On the other hand, since $\omega_1, \omega_2$ both belong to $\ord(\mu)$, they commute with $\mu$, whence the greatest common divisor of $\omega_1\mu = \mu\omega_1$ and $\omega_2\mu=\mu\omega_2$ is divisible by $\rho$ from the right. This allows us to conclude, that there exists an integral quaternion $\mu'$, which satisfies the relation

\begin{equation} \label{eq:rhomu}
\rho\mu = \mu'\rho.
\end{equation}

\par If we multiply the previous equation by $\mu$ from the right, it is not hard to notice that $\mu'^2 = -m$, so $\mu'$ forms another quadratic order $\ord(\mu')$ in $\iquat$. A similar reasoning can be applied towards the left pseudo generator of an ideal. Thus, if $\mathfrak a = \Rideal \rho = \Lideal{\rho'}$ belongs to the quadratic order $\ord(\mu)$, then the relations

$$
\mu' = \rho\mu\rho^{-1}, \,\,\, \mu'' = \rho'^{-1}\mu\rho'
$$

generate the \emph{right} and \emph{left orders} $\ord(\mu')$ and $\ord(\mu'')$, respectively. The relation (\ref{eq:rhomu}) describes a very important property of a pseudo generator $\rho$: while $[a, b + \omega] = \Rideal \rho$ belongs to the quadratic order $\ord(\mu)$, $[a, b + \omega'] = \Lideal \rho$ belongs to the quadratic order $\ord(\mu')$, where $\omega' = (r - 1 + \mu')/r$.

\par Let us now answer the following question: if we know the pseudo generators of an ideal $\mathfrak a = \Lideal{\rho'} = \Rideal \rho$ in $\ord(\mu)$, then how do pseudo generators of $\overline{\mathfrak a}$ (the conjugate of $\mathfrak a$) look like?

\begin{prop}
Let $\mathfrak a = \Lideal{\rho'} = \Rideal \rho$ belong to the quadratic order $\ord(\mu)$. Then $\overline{\mathfrak a} =\Lideal{\overline \rho} = \Rideal{\overline{\rho'}}$.
\end{prop}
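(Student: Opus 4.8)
The plan is to argue directly from the set-theoretic definitions of $\Rideal{\rho}$ and $\Lideal{\rho'}$, using that quaternion conjugation $q \mapsto \overline q$ is an involutive anti-automorphism of $\iquat$ which maps $\iquat$ bijectively onto itself, and that, by definition, $\overline{\mathfrak a} = \{\,\overline q : q \in \mathfrak a\,\}$.

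The first step is to record the auxiliary fact that $\overline{\ord(\mu)} = \ord(\mu)$. Writing $\ord(\mu) = [1,\omega]$ with $\omega$ as in (\ref{eq:omega}), one checks that $\overline\omega \in [1,\omega]$ in each of the two cases: if $r = 1$ then $\omega = \mu$ and $\overline\omega = -\mu$; if $r = 2$ then $\overline\omega = (1-\mu)/2 = 1 - \omega$. Since conjugation is additive and fixes $1$, this gives $\overline{[1,\omega]} \subseteq [1,\omega]$, and applying conjugation a second time yields equality.

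The second step is the transport of generators. Given $q \in \overline{\mathfrak a}$ with $\mathfrak a = \Rideal{\rho}$, write $\overline q = \xi\rho$ with $\xi \in \iquat$ and $\xi\rho \in \ord(\mu)$; conjugating gives $q = \overline\rho\,\overline\xi$ with $\overline\xi \in \iquat$ and, by the first step, $\overline\rho\,\overline\xi = \overline{\xi\rho} \in \ord(\mu)$, so $q \in \Lideal{\overline\rho}$. Conversely, any $\overline\rho\,\eta \in \Lideal{\overline\rho}$ is the conjugate of $\overline\eta\,\rho$, which lies in $\Rideal{\rho} = \mathfrak a$ because $\overline{\overline\rho\,\eta} = \overline\eta\,\rho \in \overline{\ord(\mu)} = \ord(\mu)$; hence $\overline\rho\,\eta \in \overline{\mathfrak a}$. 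Therefore $\overline{\mathfrak a} = \Lideal{\overline\rho}$, and the identity $\overline{\mathfrak a} = \Rideal{\overline{\rho'}}$ follows by the same computation applied to $\mathfrak a = \Lideal{\rho'}$.

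The only delicate point — and thus the main (if modest) obstacle — is precisely the verification that conjugation stabilizes $\ord(\mu)$, since this is what allows the membership condition cutting $\Rideal{\rho}$ out of the left ideal $(\rho)_L$ to be matched with the condition cutting $\Lideal{\overline\rho}$ out of the right ideal generated by $\overline\rho$; everything else is bookkeeping with the anti-automorphism law $\overline{pq} = \overline q\,\overline p$. As a consistency check one may also note that $N(\overline\rho) = N(\rho) = N(\mathfrak a) = N(\overline{\mathfrak a})$ and that $\overline\rho$ is primitive precisely when $\rho$ is, so $\overline\rho$ has exactly the shape required of the left pseudo generator of $\overline{\mathfrak a}$.
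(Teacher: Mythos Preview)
Your proof is correct, and it is genuinely different from the paper's. The paper argues via the $\RZ$-basis: writing $\mathfrak a = [a, b+\omega]$ and $b+\omega = \xi\rho$, it conjugates to get $\overline{b+\omega} = \overline\rho\,\overline\xi$, then uses $a = N(\rho) = \overline\rho\rho$ to identify $\overline\rho$ as $\gcd_l\!\big(a,\overline{b+\omega}\big)$, i.e.\ as the left pseudo generator of $\overline{\mathfrak a} = [a,\overline{b+\omega}]$. You instead work purely from the set-theoretic description $\Rideal{\rho} = (\rho)_L \cap \ord(\mu)$ and the anti-automorphism law $\overline{pq} = \overline q\,\overline p$, after first checking that $\ord(\mu)$ is closed under conjugation. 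Your route is arguably cleaner: it avoids the $\RZ$-basis and the gcd machinery entirely, and it makes transparent why the right generator becomes a left generator (conjugation reverses multiplication). The paper's route, on the other hand, stays closer to the computational framework of the rest of Section~\ref{sec:background} and yields the explicit gcd formula for $\overline\rho$ as a by-product.
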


\begin{proof}
Let $\mathfrak a = [a,b + \omega]$. Then the right and left pseudo generators of an ideal can be found from the following relations:

$$
\rho = \gcd\,\!_r(a, b + \omega), \,\,\, \rho' = \gcd\,\!_l(a, b + \omega).
$$

Let $b + \omega = \xi\rho$ for some integral quaternion $\xi$. Then $\overline{b + \omega} = \overline{\xi\rho} = \overline{\rho} \cdot \overline{\xi}$. Since $a = N(\rho)$, the left generator of $\overline{\mathfrak a} = \left[a, \overline{b + \omega}\right]$ is equal to

$$
\gcd\,\!_l\left(N(\rho), \overline{b + \omega}\right) = \gcd\,\!_l\left(\overline{\rho} \cdot \rho, \overline{\rho} \cdot \overline{\xi}\right) = \overline{\rho}.
$$

Analogously, we can find the right pseudo generator $\overline{\rho'}$ of $\overline{\mathfrak a}$ by noticing that $a = N(\rho')$.
\end{proof}

As a special case of this proposition, we notice that an \emph{amgiuous} ideal $\mathfrak a = \Rideal \rho$, i.e. the ideal with a property $\mathfrak a = \overline{\mathfrak a}$, has $\overline{\rho}$ as its left pseudo generator. We also emphasize another important property of ambiguous ideals in the following theorem.

\begin{thm} \label{thm:ambiguous}
If $\mathfrak a = \Rideal \rho$ is ambiguous in the quadratic order $\ord(\mu)$, then $\rho$ divides $(2/r)\mu$ from the right, where $r$ is defined as in relation (\ref{eq:omega}).
\end{thm}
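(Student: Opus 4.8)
The plan is to combine the defining property of an ambiguous ideal, $\mathfrak a = \overline{\mathfrak a}$, with the basic fact that \emph{every} element of $\Rideal\rho$ is divisible by $\rho$ from the right. The whole argument is then a one-line computation with $b+\omega$ and its conjugate.

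Write $\mathfrak a = [a, b+\omega]$, so that $\rho = \gcd_r(a, b+\omega)$ divides $b+\omega$ from the right; hence $b + \omega = \xi\rho$ for some integral quaternion $\xi$ (this is the same $\xi$ that appears in the background section). Next I would note that $\ord(\mu) = [1,\omega]$ is closed under conjugation: $\overline\omega = -\omega$ when $r=1$ and $\overline\omega = 1-\omega$ when $r=2$, so in either case $\overline\omega \in [1,\omega]$, and therefore $\overline{b+\omega} \in \ord(\mu)$. Now invoke ambiguity: since $b+\omega \in \mathfrak a$, we get $\overline{b+\omega} \in \overline{\mathfrak a} = \mathfrak a = \Rideal\rho$, so $\overline{b+\omega}$ is also divisible by $\rho$ from the right, say $\overline{b+\omega} = \zeta\rho$ with $\zeta \in \iquat$. (Alternatively one can read off $\overline{b+\omega} = \overline\eta\,\rho$ directly from the remark preceding the theorem, which gives $\mathfrak a = \Lideal{\overline\rho}$, i.e.\ $b+\omega = \overline\rho\,\eta$.)

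Subtracting the two representations and using $\overline\mu = -\mu$ together with $b \in \RZ$,
$$
(2/r)\mu \;=\; \omega - \overline\omega \;=\; (b+\omega) - \overline{b+\omega} \;=\; \xi\rho - \zeta\rho \;=\; (\xi - \zeta)\rho ,
$$
since $\omega - \overline\omega = \big((r-1+\mu) - (r-1-\mu)\big)/r = 2\mu/r$. As $\xi - \zeta$ is an integral quaternion, this is exactly the assertion that $\rho$ divides $(2/r)\mu$ from the right.

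I do not expect a genuine obstacle. The single point that deserves care is that the conclusion rests on membership in $\Rideal\rho = (\rho)_L \cap \ord(\mu)$ rather than in the left ideal $(\rho)_L$ alone: it is the former that produces a factorization $\overline{b+\omega} = \zeta\rho$ with $\zeta$ \emph{integral}, and this is precisely why one first checks $\overline{b+\omega} \in \ord(\mu)$. One should also observe in passing that $(2/r)\mu$ is itself an integral quaternion — $\mu$ has zero real part and integer coefficients, so both $\mu$ and $2\mu$ lie in $\iquat$ — so that the divisibility statement is meaningful. (If one wishes to cover non-primitive ideals as well, the same computation applied to any element of $\mathfrak a$ with $\omega$-coefficient $1$ works verbatim, and such an element exists only in the primitive case; for a non-primitive $\mathfrak a = c[a_0,b_0+\omega]$ one obtains the analogous statement for $c(2/r)\mu$.)
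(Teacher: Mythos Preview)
Your argument is correct and coincides with the paper's own proof: both write $b+\omega=\xi\rho$ and, using $\mathfrak a=\overline{\mathfrak a}$, also $\overline{b+\omega}=\zeta\rho$, then subtract to obtain $(2/r)\mu=(\xi-\zeta)\rho$. The extra remarks you add (closure of $\ord(\mu)$ under conjugation, integrality of $(2/r)\mu$, the non-primitive case) are sound but not needed for the core step.
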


\begin{proof}
Since $\mathfrak a = \overline{\mathfrak a}$, an ideal $\mathfrak a = [a, b + \omega]$ contains both $b + \omega$ and $\overline{b + \omega}$. Let $b + \omega = \xi\rho$, $\overline{b + \omega} = \xi'\rho$ for some $\xi, \xi' \in \iquat$. Then

$$
b + \omega - \overline{b + \omega} = b + \frac{r - 1 + \mu}{r} - \left(b + \frac{r - 1 - \mu}{r}\right) = (2/r)\mu = \xi\rho - \xi'\rho;
$$

Hence, $\xi - \xi' = (2/r)\mu\rho^{-1}$, i.e. the quaternion $(2/r)\mu\rho^{-1}$ is integral, which means that $\rho$ divides $(2/r)\mu$ from the right.
\end{proof}

We are aware that in a maximal quadratic order $\ord$ of  $\QF{-m}$, if $\mathfrak a$ is ambiguous, then $N(\mathfrak a)$ divides $(2/r)^2 m$ \cite[Thm. 7.11]{jacobson}. The theorem above demonstrates that an analogous statement holds for $\ord(\mu)$ as well, where $N(\mu) = m$. Moreover, we can notice that our theorem implies this result, for if $\rho$ divides $(2/r)\mu$ from the right, then $N(\rho) = N(\mathfrak a)$ divides $N\left((2/r)\mu\right) = (2/r)^2m$.

So far, we have seen many interesting properties that ideals and their pseudo generators have. Now that we know how ideals get represented in the ring of integral quaternions, it would be interesting to study how to operate them. A common approach to manipulate ideals is to use their $\RZ$-basis representation $[a, b + c\omega]$, where integers $a$, $b$ satisfy inequalities $a > 0$ and $0 \leq b < a$, and $c$ divides both $a$ and $b$. This approach is used in ideal reduction \cite[Sec. 5.1]{jacobson} and multiplication \cite[Sec. 5.4]{jacobson} algorithms. In the next section, we introduce a number of algorithms, which allow us to manipulate ideals in the ring of integral quaternions using solely their pseudo generators. While developing these algorithms, our primary goal was to avoid \emph{any} mentioning of the $\RZ$-basis. As a result, besides the reduction and multiplication algorithms, we introduce two additional ones: an algorithm to restore the $\RZ$-basis of an ideal by its pseudo generator, and an algorithm to compute the right pseudo generator of an ideal using its left pseudo generator.

\section{Algorithms} \label{sec:algorithms}

We begin our explorations with the series of definitions, where we consider two integral quaternions $q$ and $r$ of the form

$$
\begin{array}{l}
q = t_0 + x_0 i + y_0 j + z_0 k,\\
r = t_1 + x_1 i + y_1 j + z_1 k.
\end{array}
$$

\begin{defin}
A \emph{scalar product} of quaternions $q$ and $r$ is the value $(q,r) = x_0x_1 + y_0y_1 + z_0z_1$;
\end{defin}

\begin{defin}
A \emph{full scalar product} of quaternions $q$ and $r$ is the value $(q,r)_\times = t_0t_1 + x_0x_1 + y_0y_1 + z_0z_1 = \Re(q)\Re(r) + (q,r)$.
\end{defin}

\begin{defin}
A \emph{vector product} of quaternions $q$ and $r$ is the value $[q,r] = (y_0z_1 - y_1z_0)i + (x_1z_0 -x_0z_1)j + (x_0y_1 - x_1y_0)k$.
\end{defin}

It is not hard to verify that, in case if $\Re(q)=\Re(r)=0$, $q$ and $r$ satisfy the relation

\begin{equation} \label{eq:product}
qr = -(q,r) + [q,r].
\end{equation}

Finally, we shall introduce one more interesting relation, which holds for any integral quaternions $q$ and $r$:

\begin{equation} \label{eq:qrrq}
\frac{qr + rq}{2} = \Re(q)\Re(r) - (q,r) + \Re(q)\vec r + \Re(r)\vec q.
\end{equation}

This equality can be easily demonstrated using the properties of quaternion arithmetic. Both of the relations above will be used in the proofs that will be introduced in the following subsections.

\par The following algorithms allow us to manipulate ideals in the ring of integral quaternions. Note that, in order to migrate from an imaginary number field $\QF{-m}$ to the ring of integral quaternions, we need to generate one initial representation of an integer $m$ as a sum of three squares, i.e. $m = x^2 + y^2 + z^2$. Fortunately, there exists a fast algorithm, which computes integers $x$, $y$, $z$ with high probability in random polynomial time \cite[p. S254]{rabin}.

\subsection{Restoring the $\RZ$-basis}

Consider the following problem: given a primitive ideal $\Rideal \rho$ and the quadratic order $\ord(\mu)$ which it belongs to, find the $\RZ$-basis of $\Rideal \rho$. In other words, we need to find integers $a, b$ s.t. $[a, b + \omega] = \Rideal \rho$.

\par In the section \ref{sec:background}, we have demonstrated that $a = N(\mathfrak a) = N(\rho)$. Now it is left only to find $b$. One possible way to solve this problem is to find the square root of $-m$ mod $r^2a$, where $r$ is defined as in (\ref{eq:omega}), for if $[a, b + \omega]$ is a valid ideal, then $b$ has to satisfy the congruence

$$
(rb + r - 1)^2 \equiv -m \pmod{r^2 a}.
$$

There exists an effective algorithm \cite[Alg. 2.3.8]{crandall} with the worst case complexity $O(\ln^4 a)$, which computes a square root mod $a$ if $a$ is an odd prime. However, we would like to solve this problem for \emph{all} the possible values of $a$.

\par Before we introduce another algorithm, which solves the problem for any $a$, let us first prove a couple of important propositions, which shall be quite useful in our future explorations.

\begin{prop} \label{prop:equal_scalar_products_rho}
If real parts of quaternions $\mu$ and $\mu'$ are equal, then the equality $\rho\mu = \mu'\rho$ implies $(\mu,\rho) = (\mu', \rho)$.
\end{prop}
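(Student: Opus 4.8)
The plan is to reduce the claim to an identity about the \emph{full} scalar product and then to exploit the relation (\ref{eq:rhomu}) in a ``reversed'' form. Since the definitions give $(\mu,\rho)_\times = \Re(\mu)\Re(\rho) + (\mu,\rho)$ and likewise $(\mu',\rho)_\times = \Re(\mu')\Re(\rho) + (\mu',\rho)$, and since the hypothesis $\Re(\mu)=\Re(\mu')$ makes the two scalar-part products coincide, it suffices to prove $(\mu,\rho)_\times = (\mu',\rho)_\times$. Expanding a product of quaternions and using (\ref{eq:product}) on the vector parts, one checks in the usual way that $(q,r)_\times = \Re(q\overline{r})$ for all integral quaternions $q,r$, and hence that $\Re(q\overline{r}) = \Re(\overline{r}q)$ because the real part of a product is symmetric, $\Re(xy)=\Re(yx)$. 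Thus the whole problem comes down to showing $\Re(\mu\overline{\rho}) = \Re(\mu'\overline{\rho})$.

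The key step is the following ``mirror'' of (\ref{eq:rhomu}): I claim $\overline{\rho}\,\mu' = \mu\,\overline{\rho}$. Indeed, multiplying $\rho\mu = \mu'\rho$ on the left by $\overline{\rho}$ and using $\overline{\rho}\rho = N(\rho)$ gives $N(\rho)\mu = \overline{\rho}\mu'\rho$; multiplying this on the right by $\rho^{-1} = \overline{\rho}/N(\rho)$ and using $\rho\overline{\rho}=N(\rho)$ once more yields $\mu\overline{\rho} = \overline{\rho}\mu'$, as claimed. Here $\rho$ is nonzero (being the pseudo generator of a primitive ideal, it has norm $N(\mathfrak a)>0$), hence invertible in the Hamilton algebra.

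Combining the two observations then finishes the argument: $(\mu',\rho)_\times = \Re(\mu'\overline{\rho}) = \Re(\overline{\rho}\mu') = \Re(\mu\overline{\rho}) = (\mu,\rho)_\times$, and subtracting the common quantity $\Re(\mu)\Re(\rho) = \Re(\mu')\Re(\rho)$ gives $(\mu,\rho) = (\mu',\rho)$.

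I do not expect a genuine obstacle here; the single idea needed is to pass from the left multiplication by $\mu$ in (\ref{eq:rhomu}) to the reversed identity $\overline{\rho}\mu' = \mu\overline{\rho}$, after which the symmetry of the real part does everything else. One can instead avoid conjugates and argue directly from (\ref{eq:qrrq}) together with the quadratic relation (\ref{eq:quat_relation}) applied to $\rho$, but that route is longer. It is also worth remarking that the hypothesis $\Re(\mu)=\Re(\mu')$ is automatic, since (\ref{eq:rhomu}) forces $\mu' = \rho\mu\rho^{-1}$ and conjugation by an invertible quaternion preserves real parts; it is singled out only for convenience.
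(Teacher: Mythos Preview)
Your argument is correct, but it follows a different line than the paper's own proof. The paper proceeds by brute expansion: it writes $\rho=\Re(\rho)+\vec\rho$ and $\mu=\Re(\mu)+\vec\mu$, expands both sides of $\rho\mu=\mu'\rho$, cancels using $\Re(\mu)=\Re(\mu')$, applies the pure-quaternion product formula (\ref{eq:product}) to $\vec\rho\vec\mu$ and $\vec{\mu'}\vec\rho$, and then reads off the real parts. Your route is more structural: you pass to the full scalar product via the identity $(q,r)_\times=\Re(q\overline r)$, derive the conjugate-mirror relation $\overline\rho\,\mu'=\mu\,\overline\rho$ from (\ref{eq:rhomu}), and let the cyclicity $\Re(xy)=\Re(yx)$ do the work. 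Your method explains \emph{why} the equality holds (it is trace invariance under the inner automorphism $\mu\mapsto\rho\mu\rho^{-1}$), while the paper's computation stays closer to the raw coordinates and needs no appeal to invertibility. One small remark: you justify $\rho\neq 0$ by calling it a pseudo generator, but the proposition is stated for arbitrary quaternions; simply note that when $\rho=0$ both scalar products vanish and the claim is trivial.
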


\begin{proof}
Let us write $\rho, \mu$ as the sum of their real and vector parts:

$$
(\Re(\rho) + \vec \rho)(\Re(\mu) + \vec \mu) = (\Re(\mu') + \vec{\mu'})(\Re(\rho) + \vec \rho);
$$

$$
\Re(\rho)\Re(\mu) + \Re(\rho)\vec \mu + \Re(\mu)\vec \rho + \vec \rho \vec \mu = \Re(\mu')\Re(\rho) + \Re(\rho)\vec{\mu'} + \Re(\mu')\vec \rho + \vec{\mu'}\vec \rho.
$$

Since $\Re(\mu) = \Re(\mu')$,

$$
\Re(\rho)\vec \mu + \vec \rho \vec \mu = \Re(\rho)\vec{\mu'} + \vec{\mu'}\vec \rho.
$$

If we now apply the relation (\ref{eq:product}), we shall get

$$
-(\rho,\mu) + [\rho,\mu] + \Re(\rho)\vec\mu = -(\mu',\rho) + [\mu',\rho] + \Re(\rho)\vec{\mu'}.
$$

By taking the real part on both sides of the equality above, we obtain the desirable result: $(\mu,\rho) = (\mu', \rho)$.
\end{proof}

\begin{prop} \label{prop:main_prop}
If a primitive ideal $[a, b + \omega] = \Rideal \rho$ belongs to the quadratic order $\ord(\mu)$, then

\begin{equation} \label{eq:main_relation_rho}
(\omega, \rho) +\Re(\rho)\left(b + \frac{r - 1}{r}\right) = \Re(\xi)N(\rho).
\end{equation}

where $\xi$ is an integral quaternion, which satisfies the relation $b + \omega = \xi\rho$, and an integer $r$ is defined as in (\ref{eq:omega}).
\end{prop}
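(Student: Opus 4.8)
The plan is to exploit the defining relation $b + \omega = \xi\rho$ directly by clearing $\rho$ through right multiplication by its conjugate. Since $\rho\overline{\rho} = N(\rho)$ is a rational scalar, this gives $(b+\omega)\overline{\rho} = \xi\rho\overline{\rho} = \xi N(\rho)$, and taking real parts on both sides yields $\Re\left((b+\omega)\overline{\rho}\right) = \Re(\xi)N(\rho)$, which is already the right-hand side of (\ref{eq:main_relation_rho}). Everything then reduces to evaluating the real part on the left.

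The key computational ingredient I would isolate first is the identity $\Re(q\overline{r}) = (q,r)_\times = \Re(q)\Re(r) + (q,r)$, valid for all quaternions $q$ and $r$. This follows immediately by writing $q = \Re(q) + \vec q$, $r = \Re(r) + \vec r$, expanding $q\overline{r}$, and applying (\ref{eq:product}) to the product $\vec q\,\vec r = -(\vec q,\vec r) + [\vec q,\vec r]$ of the two vector parts: all vector-part contributions have zero real part except $-(\vec q,\vec r)$, which combines with $\Re(q)\Re(r)$ to give $(q,r)_\times$. Applying this with $q = b + \omega$ and $r = \rho$, using that the scalar product depends only on vector parts so that $(b+\omega,\rho) = (\omega,\rho)$, and reading off $\Re(b+\omega) = b + \frac{r-1}{r}$ from (\ref{eq:omega}), we obtain
\[
\Re\left((b+\omega)\overline{\rho}\right) = \Re(b+\omega)\Re(\rho) + (b+\omega,\rho) = \left(b + \frac{r-1}{r}\right)\Re(\rho) + (\omega,\rho).
\]
Equating this with $\Re(\xi)N(\rho)$ gives exactly (\ref{eq:main_relation_rho}).

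I do not expect a genuine obstacle here: once the identity $\Re(q\overline{r}) = (q,r)_\times$ is in hand, the argument is pure bookkeeping. The only points needing mild care are (i) that $N(\rho)$ equals $a = N(\mathfrak a)$ — but this was already established in Section~\ref{sec:background}, so it is harmless and the statement is in fact cleaner left in terms of $N(\rho)$; and (ii) keeping the rational scalar $b + \frac{r-1}{r}$ (half-integral when $r = 2$) separate from the quaternion arithmetic, which is automatic since it is central. Alternatively one can skip the $(q,r)_\times$ identity and expand $(b+\omega)\overline{\rho}$ by brute force using $\omega = \frac{r-1+\mu}{r}$, $\overline{\rho} = \Re(\rho) - \vec\rho$, and (\ref{eq:product}) for $\vec\mu\,\vec\rho$; this is the same computation organized differently, and it makes transparent that the term $(\omega,\rho)$ in (\ref{eq:main_relation_rho}) is precisely $\frac{1}{r}(\mu,\rho) = -\frac{1}{r}\Re(\vec\mu\,\vec\rho)$.
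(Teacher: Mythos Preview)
Your proof is correct and considerably more direct than the paper's. The paper proceeds by first invoking the relation $\rho\mu=\mu'\rho$ to deduce $b+\omega'=\rho\xi$, then adding this to $b+\omega=\xi\rho$ and applying (\ref{eq:qrrq}) to obtain the system (\ref{eq:system}); it then needs Proposition~\ref{prop:equal_scalar_products_rho} to conclude $\left(\frac{\omega+\omega'}{2},\rho\right)=(\omega,\rho)$, takes the scalar product of the vector-part equation with $\rho$, and finally substitutes to eliminate $(\rho,\xi)$. Your argument bypasses all of this by right-multiplying by $\overline{\rho}$ and reading off the real part via $\Re(q\overline{r})=(q,r)_\times$, never touching $\mu'$ or Proposition~\ref{prop:equal_scalar_products_rho}. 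In effect you have shown that (\ref{eq:main_relation_rho}) is a pure quaternion identity following from $b+\omega=\xi\rho$ alone, independent of the ideal-theoretic hypothesis. The trade-off is that the paper's longer route produces the intermediate system (\ref{eq:system}), which is reused in Section~6 to derive further relations; your approach sacrifices that by-product but is the cleaner proof of the proposition itself.
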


\begin{proof}
Since $b + \omega$ belongs to $\Rideal \rho$, there exists an integral quaternion $\xi$ s.t. $b + \omega = \xi\rho$. Because $\Rideal \rho$ belongs to $\ord(\mu)$, the relation (\ref{eq:rhomu}) must hold for some $\mu'$ of the same norm as $\mu$. If we now multiply $b + \omega = \xi\rho$ by $\rho$ from the left and from the right, we shall get $b + \omega' = \rho\xi$, where $\omega' = (r - 1 + \mu')/r$. Therefore, $2b + \omega + \omega' = \rho\xi + \xi\rho$, and from the relation (\ref{eq:qrrq}) we obtain the following system of equations:

\begin{equation} \label{eq:system}
\left\{
\begin{array}{l}
b = \Re(\rho)\Re(\xi) - (\rho,\xi) - \frac{r - 1}{r};\\
\frac{\omega + \omega'}{2} = \Re(\xi)\vec \rho + \Re(\rho)\vec \xi + \frac{r - 1}{r}.
\end{array}
\right.
\end{equation}

From the proposition \ref{prop:equal_scalar_products_rho} it follows that $(\omega,\rho) = (\omega',\rho)$, and therefore $\left(\frac{\omega + \omega'}{2}, \rho\right) = (\omega, \rho)$. If we now combine this statement with the second equality of the system (\ref{eq:system}), we shall obtain

$$
(\omega, \rho) = \left(\frac{\omega + \omega'}{2}, \rho\right) = \left(\Re(\rho)\vec \xi + \Re(\xi)\vec \rho + \frac{r - 1}{r}, \rho\right) =
$$

$$
= \Re(\rho)(\rho,\xi) + \Re(\xi)(\rho,\rho) = \Re(\rho)(\rho,\xi) + \Re(\xi)\left(N(\rho) - \Re(\rho)^2\right).
$$

From the first equality of the system (\ref{eq:system}), we know that $(\rho,\xi) = \Re(\rho)\Re(\xi) - b - (r - 1) / r$. Now, a simple substitution of $(\rho, \xi)$ in the equation above will give us the relation (\ref{eq:main_relation_rho}):

$$
(\omega,\rho) = \Re(\rho)\left(\Re(\rho)\Re(\xi) - b - \frac{r - 1}{r}\right) + \Re(\xi)\left(N(\rho) - \Re(\rho)^2\right) = 
$$

$$
= -\Re(\rho)\left(b + \frac{r - 1}{r}\right) + \Re(\xi)N(\rho).
$$
\end{proof}

This proposition allows us to formulate the following algorithm, which computes the $\RZ$-basis of an ideal $\Rideal \rho$ in the quadratic order $\ord(\mu)$.

\begin{alg} \label{alg:restore}
\emph{Restoring the $\RZ$-basis of an ideal by its pseudo generator.}

\par \textbf{Input:} a quadratic order $\ord(\mu)$ and a primitive ideal $\Rideal \rho$, which belongs to this order;
\par \textbf{Output:} Integers $a$ and $b$ s.t. $\Rideal \rho = [a, b + \omega]$.

\begin{enumerate}[1.]
\item Find a unit $\varepsilon$ s.t. $\Re(\varepsilon\rho)$ is an odd integer. Put $\rho := \varepsilon\rho$;

\item If $N(\rho)$ is even and $N(\mu)$, $(\mu,\rho)$ have different parity, set $t := 2$. Otherwise set $t := 1$;

\item Find $d = \gcd\left(\Re(\rho), N(\rho) / t\right)$;

\item Using the extended Euclidean algorithm, compute the smallest non-negative $b$, s.t. $N(\rho) \leq N(b + \omega)$, and

$$
rb + r - 1 \equiv - \frac{(\mu,\rho)}{d}\left(\frac{\Re(\rho)}{d}\right)^{-1} \,\,\, \left(\textnormal{mod} \,\,\, \frac{N(\rho)}{td}\right);
$$

\item Return $a = N(\rho)$ and $b$.
\end{enumerate}
\end{alg}

\begin{proof_cor}
The work of this algorithm is based on the relation (\ref{eq:main_relation_rho}). Obviously, if $\Re(\rho) = 0$, then the summand which contains $b$, vanishes. This is why we redefine $\rho$ so that its real part would not be equal to zero. In this case, we pick $\varepsilon$ to be equal to either $i$, $j$, or $k$, depending on which coefficient of $\rho$ is odd.

\par We would also like to avoid the case when $\rho$ has half-integer coefficients. Fortunately, there will always exist exactly \emph{eight} units of the form $(\pm 1 \pm i \pm j \pm k)/2$, which will make $\varepsilon\rho$ have integral coefficients. In particular, if $\rho$ has even (odd) number of coefficients congruent to 1 mod 4, then all units of the type mentioned above, which have an odd (even) number of 1's, will work.

\par Finally, we argue that a pseudo generator $\rho$ with integral coefficients will always have at least one odd coefficient, for if this is not the case, then all coefficients of $\rho$ are even, which makes $\rho$ non-primitive.

\par Now that our $\rho$ has an odd real part, we want to make sure that all the other parts of the relation (\ref{eq:main_relation_rho}) are not half-integers. If we multiply this equality by $r$, and keep in mind that $r(\omega,\rho) = (\mu,\rho)$, we get

$$
(\mu, \rho) + \Re(\rho)(rb + r - 1) = r\Re(\xi)N(\rho).
$$

If $r = 2$, then all the numbers above are integers, including $r\Re(\xi)$. However, if $r = 1$,  then we have to check whether $\Re(\xi)$ is a half-integer. This can happen only if $N(\rho)$ is even, because the value on the left side can never be a half-integer.

\par Suppose that $N(\rho)$ is even. Let us demonstrate that $N(\rho) \equiv 2$ (mod 4). Recall that $N(\rho)$ divides $N(b + \omega) = b^2 + m$, where $m = N(\mu)$. If $m$ is odd, then $b$ must be odd as well, since 2 divides $b^2 + m$. Because $m \equiv 1$ (mod 4) and $b^2 \equiv 1$ (mod 4), we have $b^2 + m \equiv 2$ (mod 4), so $N(\rho)$ cannot be divisible by 4. On the other hand, if $m$ is even, then $m \equiv 2$ (mod 4), because $m$ is squarefree. For the reason that $m$ and $b$ have to be of the same parity, the congruence $b^2 \equiv 0$ (mod 4) holds, and therefore $b^2 + m \equiv 2$ (mod 4). Once again, this gives us the desirable result.

\par Now we know: if $N(\rho)$ is even and $\xi$ has half-integer coefficients, then the value on the right side of the previous equation has to be odd. Therefore, the value on the left side must be odd as well. When $m$ (and therefore $b$) is odd, this can happen only if $(\mu,\rho)$ is even, because $\Re(\rho)b$ will always be odd. On the other hand, if $m$ (and therefore $b$) is even, this happens only when $(\mu,\rho)$ is odd. Summarizing, when $N(\rho)$ is even and $\Re(\rho)$ is odd, then we can say that $\xi$ has half-integer coefficients if and only if $m$ and $(\mu,\rho)$ have different parity.

\par If we now set $t$ as it is described at the step 2, then the following equation has no half-integers involved, independently of the input to the algorithm:

$$
(\mu,\rho) + \Re(\rho)(rb + r - 1) = \underbrace{tr\Re(\xi)}_{\textnormal{integer}}\frac{N(\rho)}{t}.
$$

\par Define $d = \gcd\left(\Re(\rho), N(\rho) / t\right)$. Because $d$ divides both $\Re(\rho)$ and $N(\rho)/t$, it has to divide $(\mu,\rho)$ as well. If we now divide both sides of the equation above by $d$, then $\Re(\rho) / d$ and $N(\rho)/(td)$ have no common divisor greater than $1$, and therefore we can apply the extended Euclidean algorithm to find the value of $rb + r - 1$, using the formula introduced in step 4 of the algorithm. Finally, when computing $b$, we would also like to make sure that $N(\rho) \leq N(b + \omega)$, just in case if $N(\rho) > m$.
\end{proof_cor}

\begin{rem}
Analogous algorithm exists for the ring of polynomials \cite[Alg. IDEAL]{scheidler}.
\end{rem}

\begin{rem}
Algorithm \ref{alg:restore} resides on the Euclidean algorithm, so its time complexity (in the worst case) is estimated by $O\left(\log^2\Re(\rho)\right)$, where the right pseudo generator $\rho$ is chosen so that $\Re(\rho)$ is positive and odd.
\end{rem}

\subsection{Computing the right pseudo generator of an ideal using its left pseudo generator}

The next problem of a great interest is the following: given an ideal $\Lideal \rho$ in the quadratic order $\ord(\mu)$, find an integral quaternion $\rho'$ s.t. $\Lideal \rho = \Rideal{\rho'}$.

\par Consider the $\RZ$-basis of an ideal: $\Lideal \rho = \Rideal{\rho'} = [a, b + \omega]$. Recall that $\rho' = \gcd_r(a, b + \omega)$, so the most straightforward way to compute $\rho'$ is to restore the $\RZ$-basis using the algorithm \ref{alg:restore}, and then apply the Euclidean algorithm to $a$ and $b + \omega$. However, as was mentioned earlier, in this paper we try to avoid the usage of the $\RZ$-basis in our computations. In this section, we present another approach to find $\rho'$, without obtaining $b$ in the first place.

\par Recall that for any integers $X$ and $Y$, a quaternion $aX + (b + \omega)Y$ belongs to the ideal, and therefore has to be divisible by $\rho'$ from the right. Since the relation (\ref{eq:main_relation_rho}) holds for the left pseudo generator as well, if we now set $X = -\Re(\xi)$ and $Y = \Re(\rho)r$, we shall obtain

$$
aX + (b + \omega)Y = (aX + bY) + \omega Y =
$$

$$
 = \left(-\Re(\xi)N(\rho)r + \Re(\rho)br\right) + r\Re(\rho)\omega = -r(\omega,\rho) - \Re(\rho)(r - 1) + r\Re(\rho)\omega.
$$

Recall that $a = N(\rho)$ and $\xi$ satisfies the relation $b + \omega = \rho\xi$. Since $r(\omega,\rho) = (\mu,\rho)$ and $-\Re(\rho)(r - 1) + r\Re(\rho)\omega = \Re(\rho)\mu$, we conclude that

$$
aX + (b + \omega)Y = -r(\omega,\rho) - \Re(\rho)(r - 1) + r\Re(\rho)\omega = -(\mu,\rho) + \Re(\rho)\mu.
$$

Now an application of the Euclidean algorithm will help us to derive $\rho'$:

\begin{equation} \label{eq:right_by_left_1}
\rho' = \gcd\,\!_r\left(-(\mu,\rho) + \Re(\rho)\mu, N(\rho)\right).
\end{equation}

\par Consider another approach to the same problem. Since $\rho$ is the left pseudo generator of an ideal which belongs to $\ord(\mu)$, then there exists an integral quaternion $\mu'$ of the same norm as $\mu$, s.t. $\rho\mu' = \mu\rho$. If we now add $\rho\mu$ on both sides of this equation, and then apply the relation (\ref{eq:qrrq}), we shall get the following result:

$$
\rho\mu + \rho\mu' = \rho\mu + \mu\rho;
$$

$$
\rho\frac{\mu + \mu'}{2} = \frac{\mu\rho + \rho\mu}{2} = -(\mu,\rho) + \Re(\rho)\mu.
$$

So, the following formula can also be used to compute $\rho'$:

\begin{equation} \label{eq:right_by_left_2}
\rho' = \gcd\,\!_r\left(\rho\frac{\mu + \mu'}{2}, N(\rho)\right).
\end{equation}

\begin{rem}
Analogously, we can solve the problem of finding the left pseudo generator of an ideal given its right pseudo generator. In that case, the formula (\ref{eq:right_by_left_1}) does not change, except for $\gcd_r$, which gets replaced by $\gcd_l$.  In turn, the formula (\ref{eq:right_by_left_2}) takes the form

$$
\rho' = \gcd\,\!_l\left(\frac{\mu+\mu'}{2}\rho, N(\rho)\right).
$$
\end{rem}

\subsection{Ideal multiplication}

Let us now turn our attention to the problem of ideal multiplication. Consider a quadratic order $\ord(\mu)$ and two ideals, $\Rideal \rho$ and $\Rideal{\rho'}$, which belong to this quadratic order. The following algorithm multiplies these ideals using solely their pseudo generators.

\begin{alg} \emph{Ideal multiplication.}

\par \textbf{Input:} a quadratic order $\ord(\mu)$, and two primitive ideals, $\Rideal \rho$ and $\Rideal{\rho'}$, which belong to this quadratic order;
\par \textbf{Output:} (non-reduced) ideal $\Rideal \rho \cdot \Rideal{\rho'}$.

\begin{enumerate}[1.]
\item Compute $\mu' = \rho'\mu\rho'^{-1}$;
\item Compute $\rho'' = \gcd_r\left(-(\mu,\rho) + \Re(\rho)\mu', N(\rho)\right)$;
\item Return $\Rideal{\rho''\rho'}$.
\end{enumerate}
\end{alg}

\begin{proof_cor}
\par The problem of ideal multiplication was studied by Venkov in his \nth{4} letter \cite[\textsection 18]{ven4}. Let $\mathfrak a = \Rideal \rho$ and $\mathfrak b = \Rideal{\rho'}$. Venkov demonstrated that if an ideal $\mathfrak a = [a, b + *]$ (where $*$ can be replaced by $\omega$, $\omega'$, etc. to move an ideal between various quadratic orders) has a right pseudo generator $\rho''$ in a quadratic order $\ord(\mu')$, where $\mu' = \rho'\mu\rho'^{-1}$, then $\rho''\rho'$ is a right pseudo generator of an ideal $\mathfrak{ab}$ in $\ord(\mu)$. The question is, how do we find $\rho''$?

\begin{displaymath}
    \xymatrix{
\mu \ar[d]_{\rho} \ar@/_/@{.>}[dr]^{\rho''\rho'} \ar[r]^{\rho'} & \mu' \ar[d]^{\rho''}\\
\mu''' & \mu''
}
\end{displaymath}

\par We can compute $\rho''$ using an approach similar to the one described in the previous subsection. Let $\Rideal \rho = [a, b + \omega]$. Then $[a, b + \omega']$, where $\omega' = \rho'\omega\rho'^{-1}$, belongs to the quadratic order $\ord(\mu')$. For any integers $X$ and $Y$, the quaternion $aX + (b + \omega')Y$ belongs to this ideal, and therefore has to be divisible by $\rho''$ from the right. Since the relation (\ref{eq:main_relation_rho}) holds for the ideal $[a, b + \omega]$ in the quadratic order $\ord(\mu)$, then we can set $X = -\Re(\xi)$ and $Y = \Re(\rho)r$, which will give us

$$
-r(\omega,\rho) - \Re(\rho)(r - 1) + r\Re(\rho)\omega' = -(\mu,\rho) + \Re(\rho)\mu'.
$$

This integral quaternion belongs to $[a, b + \omega']$. Now $\rho''$ can be obtained using the Euclidean algorithm, as it is described in the step 2.

\par Now that we went over the steps of the algorithm, let us prove that $\rho''\rho'$ is really a pseudo generator of the product. Consider the product of $\Rideal \rho = [a, b + \omega]$ and $\Rideal{\rho'} = [a', b' + \omega]$:

$$
[a, b + \omega] \cdot [a', b' + \omega] = \left[aa', a(b' + \omega), a'(b + \omega), (b' + \omega)(b + \omega)\right].
$$

Let $b' + \omega = \xi'\rho'$ and $b + \omega' = \xi''\rho''$. Then

$$
aa' = \overline{\rho'}\,\,\overline{\rho''}\rho''\rho';
$$

$$
a(b' + \omega) = a\xi'\rho' = \xi'a\rho' = \xi'\overline{\rho''}\rho''\rho';
$$

$$
a'(b + \omega) = \overline{\rho'}\underbrace{\rho'(b + \omega)\rho'^{-1}}_{b + \omega'}\rho' = \overline{\rho'}\xi''\rho''\rho';
$$

$$
(b' + \omega)(b + \omega) = \xi'\underbrace{\rho'(b + \omega)\rho'^{-1}}_{b + \omega'}\rho' = \xi'\xi''\rho''\rho'.
$$

Since $b + \omega' = \xi''\rho''$ is primitive, quaternions $\xi''$ and $\overline{\rho''}$ do not have a common divisor from the right, and therefore $\rho''\rho'$ is the greatest common divisor of $aa'$, $a(b' + \omega)$, $a'(b + \omega)$ and $(b'+\omega)(b + \omega)$. Since these four quaternions form a $\RZ$-basis of a resulting ideal, $\rho''\rho'$ has to be the right pseudo generator of this ideal.
\end{proof_cor}

\begin{rem}
Our proof is similar to the proof for the multiplication of ideals in the ring of ad\`eles \cite{kubensky}.
\end{rem}

\subsection{Ideal reduction}

Before we turn our attention to the process of ideal reduction, let us first state some important theorems.

\begin{thm} \label{thm:ideal_generated_by_orders}
Let $\ord(\mu)$ be a quadratic order, $m = N(\mu)$. If a primitive integral quaternion $\rho$ satisfies the relation (\ref{eq:rhomu}) for some $\mu'$, and $N(\rho)$ is odd when $m \equiv 3 \,\, \textnormal{(mod 8)}$, then it is a right (left) pseudo generator of some ideal in the quadratic order $\ord(\mu)$ ($\ord(\mu')$). Conversely, if $\Rideal \rho$ is an ideal in the quadratic order $\ord(\mu)$, then $\rho$ satisfies (\ref{eq:rhomu}) for some $\mu'$, and $N(\rho)$ is odd when $m \equiv 3 \,\, \textnormal{(mod 8)}$. Moreover, $\Lideal \rho$ is an ideal in $\ord(\mu')$.
\end{thm}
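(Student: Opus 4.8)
The plan is to establish the two implications separately. The converse is essentially contained in Section~\ref{sec:background}, while the forward implication comes down to one local statement, which I expect to be the real work.

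For the converse, let $\Rideal{\rho} = [a, b + \omega]$ be a primitive ideal of $\ord(\mu)$, so that $a = N(\rho) = N(\Rideal{\rho})$ and $a \mid N(b + \omega)$. That $\rho$ satisfies \eqref{eq:rhomu} for some integral $\mu'$ with $\mu'^2 = -m$, and that $\Lideal{\rho} = [a, b + \omega']$ belongs to $\ord(\mu')$, were both derived in Section~\ref{sec:background}; only the parity assertion is new. When $m \equiv 3 \pmod 8$ we have $r = 2$ and $\omega = (1 + \mu)/2$, so $N(b + \omega) = \bigl((2b+1)^2 + m\bigr)/4$; since $(2b+1)^2 \equiv 1 \pmod 8$ and $m \equiv 3 \pmod 8$, this is an odd integer, and as $a = N(\rho)$ divides it, $N(\rho)$ is odd.

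For the forward implication, let $\rho$ be primitive with $\rho\mu = \mu'\rho$ and, when $m \equiv 3 \pmod 8$, with $N(\rho)$ odd. Multiplying \eqref{eq:rhomu} on the right by $\mu$ and using \eqref{eq:quat_relation} gives $\mu'^2 = -m$, hence $\Re(\mu') = 0$ and $N(\mu') = m$, so $\omega' = (r - 1 + \mu')/r$ is integral (for $r = 2$ this is because $m \equiv 3 \pmod 8$ forces the three coordinates of $\vec{\mu'}$ to be odd, being a sum of three odd squares). Rearranging \eqref{eq:rhomu} as $\rho\omega = \omega'\rho$ shows $(\rho)_L$ is stable under right multiplication by $\ord(\mu)$ and $(\rho)_R$ is stable under left multiplication by $\ord(\mu')$; with the obvious closures this makes $\Rideal{\rho} = (\rho)_L \cap \ord(\mu)$ an ideal of $\ord(\mu)$ and $\Lideal{\rho} = (\rho)_R \cap \ord(\mu')$ an ideal of $\ord(\mu')$. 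After replacing $\rho$ by a left-unit multiple (which changes neither $\Rideal{\rho}$ nor $N(\rho)$ nor primitivity) we may assume $\rho$ has integral coordinates. It remains to see that $\rho$ is the pseudo generator of $\Rideal{\rho}$, i.e. that $\Rideal{\rho}$ is primitive of norm $N(\rho)$; this genuinely uses the hypotheses, since $\rho = 1 + i$ with $m = 3$ and $\mu = i + j + k$ (where $N(\rho) = 2$ is even) has $\Rideal{\rho} = 2\,\ord(\mu)$, whose pseudo generator is $2$, not $\rho$. I would deduce it from two facts: (i) the least positive rational integer in $\Rideal{\rho}$ equals $N(\rho)$, which follows from primitivity by a $\gcd$-of-coordinates argument on $n = \xi\rho$, using that a primitive quaternion cannot have all four coordinates odd; and (ii) there is an integer $b$ with $\rho \mid b + \omega$ from the right. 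Granting (i) and (ii), $\Rideal{\rho} = [N(\rho),\, b + \omega]$ is primitive (the coefficient $1$ on $\omega$ forces content $1$), so by Section~\ref{sec:background} its pseudo generator $\gcd_r(N(\rho),\, b + \omega)$ has norm $N(\rho)$; since $\rho$ right-divides $N(\rho) = \rho\overline{\rho}$ and $b + \omega$, it right-divides this gcd, and a norm count gives $\gcd_r(N(\rho),\, b + \omega) \sim \rho$ (associates up to a left unit). The statement for $\Lideal{\rho}$ in $\ord(\mu')$ is the left--right mirror of this, using $\omega'\rho = \rho\omega$ and the analogues of (i)--(ii).

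The hard part is (ii). I would argue locally at the primes $p \mid N(\rho)$. For odd $p$ one has $\iquat/p\iquat \cong M_2(\mathbb F_p)$; as $\rho$ is primitive with $p \mid \det\overline{\rho} = N(\rho)$ but $\overline{\rho} \neq 0$, the reduction $\overline{\rho}$ has rank one, and the relation $\overline{\rho}\,\overline{\mu} = \overline{\mu'}\,\overline{\rho}$ forces $\overline{\mu}$ to preserve the line $\ker\overline{\rho}$, acting on it by a scalar $\lambda_p \in \mathbb F_p$ with $\lambda_p^2 = -m$. This exhibits a square root of $-m$ modulo $p$ and pins down $b \equiv -\lambda_p \pmod p$ with $\rho \mid b + \omega$ modulo $p$; one then lifts to the modulus $r^2 N(\rho)$ by the Hensel/CRT argument underlying Algorithm~\ref{alg:restore}, the hypotheses on $\rho$ and $m$ (together with the non-existence of primitive quaternions whose norm is divisible by $4$) precluding the obstructions to lifting. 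The prime $p = 2$ arises only when $N(\rho)$ is even, which the hypothesis permits precisely when $m \not\equiv 3 \pmod 8$, and is handled separately via the local structure of $\iquat$ at $2$. Alternatively, (ii) can be extracted from Venkov's treatment of this correspondence in \cite[\textsection 14]{ven2}.
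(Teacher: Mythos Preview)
Your converse direction matches the paper's: both defer to Section~\ref{sec:background} for \eqref{eq:rhomu} and the statement about $\Lideal\rho$, and both obtain the parity constraint from $(2b+1)^2 + m \equiv 4 \pmod 8$. For the forward direction the routes diverge. The paper is brief: it invokes Venkov \cite[\textsection 10]{ven2} for the fact that $\rho\mu = \mu'\rho$ forces $-m$ to be a quadratic residue modulo $N(\rho)$, lifts this to a root modulo $r^2N(\rho)$ under the parity hypothesis, and then simply asserts that for the resulting $b$ one has $b+\omega = \xi\rho$, whence $\rho = \gcd_r(N(\rho), b+\omega)$. Your argument is more structural and self-contained: you first show abstractly that $\Rideal\rho$ is an $\ord(\mu)$-ideal, then reduce the claim that $\rho$ is its pseudo generator to (i) a primitivity argument giving $N(\rho)$ as the least positive integer in $\Rideal\rho$, and (ii) the existence of $b$ with $\rho \mid b + \omega$, which you extract prime by prime from the eigenvalue of $\overline\mu$ on the line $\ker\overline\rho \subset \mathbb F_p^2$. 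What your approach buys is an intrinsic explanation of where the square root of $-m$ comes from and an explicit witness ($\rho = 1+i$, $\mu = i+j+k$) that the parity hypothesis cannot be dropped; what the paper's approach buys is brevity by outsourcing the arithmetic to Venkov. Two remarks that sharpen your sketch: the local argument already works modulo $p^{v_p(N(\rho))}$ for odd $p$ (for primitive $\rho$ the kernel of the reduction over $\mathbb Z/p^k$ is still free of rank one, so $\overline\mu$ acts on it by a scalar whose square is $-m$), so no genuine Hensel step is needed; and at $p=2$ ramification makes things easier, not harder, since a primitive $\rho$ of even norm is a uniformizer in $\iquat\otimes\mathbb Z_2$ and $\iquat_2\rho$ is the unique maximal two-sided ideal, so $b+\mu$ lies in it as soon as $b^2 + m$ is even.
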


\begin{proof}
\par Let us first argue why it is necessary for $N(\rho)$ to be odd when $m \equiv 3$ (mod 8). If we consider a primitive ideal $[N(\rho), b + \omega]$, then  $N(\rho) \,\,| \,\, N(b + \omega) = \frac{(2b + 1)^2 + m}{4}$, so $-m$ has to be a quadratic residue of $4N(\rho)$. Since any odd integer $x$ satisfies the congruence $x^2 \equiv 1$ (mod 8), we have $(2b + 1)^2 + m \equiv 4$ (mod 8), and therefore $N(\rho)$ has to be odd. In his \nth{2} letter, Venkov proved that for any $\rho$ and $\mu$, which satisfy the relation (\ref{eq:rhomu}) for some $\mu'$, $-m$ has to be a quadratic residue of $N(\rho)$  \cite[\textsection 10]{ven2}. However, when $m \equiv 3$ (mod 8), this does not imply that $-m$ is a quadratic residue mod $4N(\rho)$. In fact, the implication takes place only if $N(\rho)$ is odd. In that case, it is easy to see that if $x^2 \equiv -m$ (mod $N(\rho)$) for some $x$, then $4N(\rho)$ divides $x^2 + m$ if $x$ is odd, and $(x + N(\rho))^2 + m$ if $x$ is even. Note that if $N(\rho)$ is even while $m \equiv 3$ (mod 8), then an ideal $\Rideal \rho$ would belong to $[1, \mu]$, which is not isomorphic to the maximal quadratic order $\ord = \left[1, (1 + \sqrt{-m})/2\right]$ of $\QF{-m}$.

\par ($\Rightarrow$) Let $\rho$ satisfy the equation $\rho\mu = \mu'\rho$ for some $\mu'$, and $N(\rho)$ be odd when $m \equiv 3$ (mod 8). Then $-m$ is a quadratic residue of $r^2N(\rho)$, where $r$ is defined as in (\ref{eq:omega}). Define $b$ as a square root of $-m$ mod $r^2N(\rho)$. Then $[N(\rho), b + \omega]$ is an ideal in $\ord(\mu)$. Let $b + \omega = \xi\rho$ for some $\xi$. Since $b + \omega$ is primitive, quaternions $\xi$ and $\overline{\rho}$ have no common divisor from the right, which makes $\rho$ a greatest common divisor of $N(\rho)$ and $b + \omega$ from the right, and therefore a right pseudo generator of $[N(\rho), b + \omega]$. Since $N(\mu) = N(\mu')$, a similar argument can be applied towards the ideal $[N(\rho), b + \omega']$, where $b + \omega' = \rho(b + \omega)\rho^{-1} = \rho\xi$.

\par ($\Leftarrow$) The proof of the converse statement was demonstrated in the section \ref{sec:background}.
\end{proof}

Next, we must point out that some quadratic orders are equivalent to each other. Note that for any unit $\varepsilon$ we have $\Rideal{\varepsilon\rho} = \Rideal \rho$. Hence, if $\rho$ satisfies the relation $\rho\mu = \mu'\rho$, then $\varepsilon\rho$ satisfies

$$
\varepsilon \rho \cdot \mu = \varepsilon\mu'\overline{\varepsilon}\cdot\varepsilon\rho.
$$

It means that our $\mu'$ in the relation (\ref{eq:rhomu}) may vary, depending on the pseudo generator we choose. As a result, we can define the following equivalence relation on the set of all quadratic orders in $\iquat$ of a certain discriminant.

\begin{defin}
We say that two quadratic orders $\ord(\mu)$ and $\ord(\mu')$ are \emph{equivalent}, if $\mu$ and $\mu'$ satisfy the relation $\mu' = \varepsilon\mu\overline{\varepsilon}$ for some unit $\varepsilon$.
\end{defin}

It is well known that ideals, which are equivalent to each other, form an \emph{equivalence class}. The following theorem describes how we can fully characterize this set.

\begin{thm} \label{thm:equivalence_class}
\emph{\cite[\textsection 9]{ven3}} Consider a quadratic order $\ord(\mu)$, which contains two equivalent ideals $\Rideal \rho$ and $\Rideal{\rho'}$. If $\rho\mu = \mu'\rho$ and $\rho'\mu=\mu''\rho'$, then $\ord(\mu')$ and $\ord(\mu'')$ are equivalent.
\end{thm}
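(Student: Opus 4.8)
The plan is to translate the equivalence $\mathfrak a\sim\mathfrak b$ into an \emph{equality} of ideals and then read off the relation between $\mu'$ and $\mu''$ from pseudo generators. Since $\ord(\mu)$ is the maximal order of $\FQ(\mu)\cong\QF{-m}$, the hypothesis $\mathfrak a\sim\mathfrak b$ means there is a $\gamma$ in the field of fractions of $\ord(\mu)$ with $\mathfrak a=\gamma\mathfrak b$; writing $\gamma=\delta_2\delta_1^{-1}$ with $\delta_1,\delta_2\in\ord(\mu)\setminus\{0\}$ and multiplying through by $\delta_1$ gives the identity $\mathfrak a\cdot(\delta_1)=\mathfrak b\cdot(\delta_2)$ of integral ideals of $\ord(\mu)$, where $(\delta_i)=\Rideal{\delta_i}$ is the principal ideal generated by $\delta_i$. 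The whole argument rests on the fact that principal ideals coming from $\ord(\mu)$ act on pseudo generators simply by concatenation from the right.

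Concretely, I would first show $\mathfrak a\cdot(\delta_1)=\Rideal{\rho\delta_1}$. As a set, $\mathfrak a\cdot(\delta_1)=\{\beta\delta_1:\beta\in\mathfrak a\}=\{\xi\rho\delta_1:\xi\in\iquat,\ \xi\rho\in\ord(\mu)\}$, and because $\delta_1$ commutes with $\mu$ the condition $\xi\rho\in\ord(\mu)$ is equivalent to $\xi\rho\delta_1\in\ord(\mu)$ (here one uses that $\ord(\mu)$ is exactly the set of integral quaternions commuting with $\mu$), so this set is precisely $\Rideal{\rho\delta_1}$; equivalently this is what the ideal multiplication algorithm returns on input $\Rideal\rho$ and $(\delta_1)$, whose rotated unit $\delta_1\mu\delta_1^{-1}$ is just $\mu$. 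Moreover $\rho\delta_1$ inherits the rotation of $\rho$: from $\rho\mu=\mu'\rho$ and $\delta_1\mu=\mu\delta_1$ we get $(\rho\delta_1)\mu=\mu'(\rho\delta_1)$. By the same reasoning $\mathfrak b\cdot(\delta_2)=\Rideal{\rho'\delta_2}$ with $(\rho'\delta_2)\mu=\mu''(\rho'\delta_2)$.

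Now $\rho\delta_1$ and $\rho'\delta_2$ are right pseudo generators of one and the same ideal $\mathfrak a\cdot(\delta_1)=\mathfrak b\cdot(\delta_2)$, so they differ by a unit from the left: $\rho\delta_1=\varepsilon\,\rho'\delta_2$ for a unit $\varepsilon\in\iquat$, whence $\varepsilon^{-1}=\overline\varepsilon$. Substituting this into the first rotation relation and using the second,
$$\mu'(\rho\delta_1)=(\rho\delta_1)\mu=\varepsilon(\rho'\delta_2)\mu=\varepsilon\mu''(\rho'\delta_2)=(\varepsilon\mu''\overline\varepsilon)(\varepsilon\rho'\delta_2)=(\varepsilon\mu''\overline\varepsilon)(\rho\delta_1),$$
and cancelling the invertible quaternion $\rho\delta_1$ on the right yields $\mu'=\varepsilon\mu''\overline\varepsilon$. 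By the definition of equivalence of quadratic orders this is exactly the assertion that $\ord(\mu')$ and $\ord(\mu'')$ are equivalent.

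The step I expect to require the most care is the identity $\mathfrak a\cdot(\delta_1)=\Rideal{\rho\delta_1}$ together with the observation that $\rho\delta_1$ is still a legitimate pseudo generator when $\mathfrak a$ and $(\delta_1)$ share a common factor, so that $\rho\delta_1$ may be non-primitive; this forces one to invoke uniqueness of pseudo generators up to a left unit for \emph{arbitrary}, not only primitive, integral quaternions. Everything else is a two-line manipulation. A heavier alternative that avoids exhibiting $\delta_1,\delta_2$ would take $\mathfrak c=\mathfrak a\overline{\mathfrak b}$, which is principal because $\mathfrak b\overline{\mathfrak b}=(N(\mathfrak b))$ in the maximal order, and then compute the quadratic order attached to $\mathfrak c$ in two ways: directly (it is $\ord(\varepsilon\mu\overline\varepsilon)$ for a unit $\varepsilon$) and through the multiplication algorithm together with the proposition on conjugates. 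That route, however, requires separately tracking the left and right orders of $\mathfrak b$, which is precisely the bookkeeping the argument above sidesteps.
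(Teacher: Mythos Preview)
The paper does not supply its own proof of this theorem; it is quoted from Venkov's third letter without argument, so there is nothing in the text to compare your proposal against. On its own merits your argument is correct.

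The one point you flag as needing care --- that $\Rideal{\rho\delta_1}=\Rideal{\rho'\delta_2}$ forces $\rho\delta_1=\varepsilon\,\rho'\delta_2$ for a unit $\varepsilon$ even when these quaternions are non-primitive --- does go through. Writing $\rho\delta_1=cq_0$ with $c\in\RZ_{>0}$ and $q_0$ primitive, one checks $\Rideal{cq_0}=c\,\Rideal{q_0}$: the condition $\xi cq_0\in\ord(\mu)$ is equivalent to $\xi q_0\in\ord(\mu)$ because $\xi q_0$ is automatically integral and commutes with $\mu$ exactly when $c\xi q_0$ does. Hence the integer contents and primitive parts of $\rho\delta_1$ and $\rho'\delta_2$ must agree, and uniqueness of the pseudo generator in the primitive case supplies the unit. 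Your identification of $\ord(\mu)$ with the set of integral quaternions commuting with $\mu$ is also legitimate: any element of $\FQ(\mu)\cap\iquat$ is an algebraic integer in $\FQ(\mu)\cong\QF{-m}$ and hence lies in its maximal order, which is $\ord(\mu)$ by construction. With these two points filled in, the remaining manipulation $\mu'(\rho\delta_1)=(\varepsilon\mu''\overline\varepsilon)(\rho\delta_1)$ is routine and yields the claim.
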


The theorem above demonstrates a rather interesting fact that all solutions to the equation $\rho\mu= \mu'\rho$ for given $\mu$ and $\mu'$ are pseudo generators of ideals, which are equivalent to each other.

\par In his \nth{3} letter, Venkov described the full set of solutions to the equation (\ref{eq:rhomu}). We summarize his results in the following theorem.

\begin{thm} \label{thm:solutions} \emph{\cite{ven3, ven4}}
Let $\mu$ and $\mu'$ be two primitive integral quaternions with the zero real part, s.t. $N(\mu) = N(\mu')$. Then the set of all solutions to the equation $\rho\mu = \mu'\rho$ takes the form of a $\RZ$-module $[\upsilon, \upsilon_1]$ for some particular integral quaternions $\upsilon$, $\upsilon_1$.
\end{thm}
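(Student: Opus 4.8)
The plan is to characterize the solution set $S = \{\rho \in \iquat : \rho\mu = \mu'\rho\}$ as a $\RZ$-module and then argue it is free of rank $2$. First I would observe that $S$ is closed under addition and under multiplication by rational integers, so it is a $\RZ$-module; moreover it is torsion-free (it lives inside the $4$-dimensional $\FQ$-vector space $\iquat \otimes \FQ$), hence free of some rank between $0$ and $4$. The equation $\rho\mu = \mu'\rho$ is $\FQ$-linear in the coordinates of $\rho$, so over $\FQ$ the solution space $S \otimes \FQ$ is a linear subspace, and the rank of $S$ equals $\dim_\FQ(S\otimes\FQ)$. The crux is therefore to show this dimension is exactly $2$.

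For the lower bound $\dim \ge 2$: since $\mu^2 = -m$, the quaternion $\mu$ itself satisfies $\mu\cdot\mu = \mu\cdot\mu$, but we need solutions of $\rho\mu = \mu'\rho$. By the discussion preceding Theorem~\ref{thm:solutions} (the paragraph around equation~(\ref{eq:rhomu})), we know at least one nonzero solution $\rho_0$ exists — any pseudo generator of an ideal of $\ord(\mu)$ whose right order is $\ord(\mu')$, which exists because $\mu$ and $\mu'$ are both primitive of norm $m$ and hence the corresponding ideal classes are linked. Given one such $\rho_0$, the quaternion $\rho_0\omega$ (equivalently $\rho_0\mu$) is a second solution: indeed $(\rho_0\mu)\mu = \rho_0\mu^2 = -m\rho_0 = \mu'(-m)\rho_0\mu^{-1}\cdot\mu = \mu'\rho_0\mu$, using $\rho_0\mu = \mu'\rho_0$. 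So $\rho_0$ and $\rho_0\mu$ both lie in $S$, and they are $\FQ$-linearly independent since $\mu \notin \FQ$. Hence $\dim_\FQ(S\otimes\FQ)\ge 2$.

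For the upper bound $\dim \le 2$: rewrite the equation as $\rho\mu\rho^{-1} = \mu'$ for invertible $\rho$; more robustly, fix a nonzero solution $\rho_0$ and note that any other solution $\rho$ satisfies $\rho_0^{-1}\rho\,\mu = \rho_0^{-1}\mu'\rho = \mu\,\rho_0^{-1}\rho$, i.e. $\eta := \rho_0^{-1}\rho$ commutes with $\mu$. The centralizer of $\mu$ in the rational quaternion algebra is the quadratic field $\FQ(\mu)\cong\QF{-m}$, which is $2$-dimensional over $\FQ$. Therefore $\rho \in \rho_0\cdot\FQ(\mu)$, a $2$-dimensional space, giving $\dim_\FQ(S\otimes\FQ)\le 2$. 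Combining the bounds, $S\otimes\FQ$ is exactly $2$-dimensional, so $S = [\upsilon,\upsilon_1]$ is a free $\RZ$-module of rank $2$; one may take $\upsilon$ to be a primitive solution of least norm and $\upsilon_1$ to complete it to a basis (for instance via a suitable integral combination of $\rho_0$ and $\rho_0\omega$, then saturating).

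The main obstacle I anticipate is not the dimension count — which is the clean linear-algebra argument above — but rather pinning down an \emph{explicit} generating pair $\upsilon,\upsilon_1$ with integral (not merely rational) entries, i.e. showing the lattice $S$ inside the $\FQ$-span is exactly of the claimed form rather than some finite-index sublattice, and handling the half-integer coefficient subtleties of $\iquat$ (the condition $a\equiv b\equiv c\equiv d \pmod 2$). This is presumably where Venkov's explicit computation in \cite{ven3, ven4} does the real work; for the purposes of this theorem it suffices to invoke that $S$, being a rank-$2$ torsion-free $\RZ$-module, has a $\RZ$-basis $[\upsilon,\upsilon_1]$, with the explicit description deferred to the cited letters.
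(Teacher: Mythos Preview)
Your abstract rank argument is essentially correct and proves the theorem as stated, but it is genuinely different from what the paper does. The paper does not argue about ranks at all: it simply records Venkov's explicit formulas for a basis $\upsilon,\upsilon_1$ (built from $(\mu+\mu')/d$ together with a second quaternion constructed via the extended Euclidean algorithm, plus a case analysis modulo~$4$ and~$8$ to catch half-integer solutions), and defers the verification that these really generate the full lattice to \cite{ven3,ven4}. So the paper's ``proof'' is constructive and algorithmic, while yours is structural: you show the solution set is a rank-$2$ free $\RZ$-module via the centralizer argument, and then invoke the structure theorem for finitely generated torsion-free modules to get \emph{some} basis. Your approach is cleaner for the bare existence claim; the paper's explicit description is what is actually needed downstream, since Algorithm~\ref{alg:reduction} (ideal reduction) requires computing $\upsilon$ and $\upsilon_1$ concretely.

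One small gap worth patching: your justification for the existence of a nonzero $\rho_0$ appeals to pseudo generators of ideals linking $\ord(\mu)$ and $\ord(\mu')$, but at this point in the development that link runs the other way (an ideal produces a $\mu'$, not vice versa), so the argument is mildly circular. The fix is immediate and in fact matches the paper's choice of $\upsilon$: when $\mu'\neq-\mu$, take $\rho_0=\mu+\mu'$, since $(\mu+\mu')\mu = -m+\mu'\mu = \mu'(\mu+\mu')$; when $\mu'=-\mu$, any pure quaternion orthogonal to $\mu$ anticommutes with it. With that direct exhibition of $\rho_0$, your dimension count goes through without reference to the ideal theory.
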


\begin{proof}
In this proof, we will simply describe the form of quaternions $\upsilon$, $\upsilon_1$. For more details, we ask our reader to refer to the original works of Venkov. Let

$$
\begin{array}{l}
\mu = xi + yj + zk,\\
\mu' = x'i + y'j + z'k.
\end{array}
$$

If $\mu' \neq -\mu$, then the set of all $\rho$ with \emph{integral} coefficients is of the form \cite[\textsection 11]{ven3}

$$
\rho = \upsilon X + \Upsilon Y,
$$

where $X, Y$ are integers, and $\upsilon$, $\Upsilon$ are two integral quaternions, defined by the following system of equations:

\begin{equation} \label{eq:integral_solutions}
\left \{
\begin{array}{l}
\upsilon = \frac{x' + x}{d}i + \frac{y' + y}{d} j + \frac{z' + z}{d}k = \frac{\mu'+\mu}{d},\\

\Upsilon = -\frac{d}{e} + \frac{p}{e}i + \frac{q}{e}j + \frac{r}{e}k.
\end{array}
\right.
\end{equation}

In this system, $d$ and $e$ are defined as follows:

$$
d = \gcd\left(x' + x, y' + y, z' + z\right),
$$

$$
e = \gcd\left(x' + x, y' + y, z' + z, x' - x, y' - y, z' - z\right).
$$

Integers $a$, $b$ and $c$ are defined using the extended Euclidean algorithm:

$$
a(x' + x) + b(y' + y) + c(z' + z) = d.
$$

Integers $p$, $q$ and $r$ are defined through $a$, $b$, $c$:

$$
p = c(y' - y) - b(z' - z), \,\,\,\,\, q = a(z' - z) - c(x' - x), \,\,\,\,\, r = b(x' - x) - a(y' - y).
$$

The full set of solutions (including those $\rho$ with half-integer coefficients) has the form \cite[\textsection 12]{ven3}

\begin{equation} \label{eq:all_solutions}
\rho = \upsilon X + \upsilon_1 Y,
\end{equation}

where $\upsilon$ is defined as in system (\ref{eq:integral_solutions}), and $\upsilon_1$ is defined as in one of the following four cases:

\begin{enumerate}[(a)]
\item
{
If $m \equiv 1, 2$ (mod 4), and one of the sums $x' + x$, $y' + y$, $z' + z$, for example $x' + x$, is even, then

$$
\upsilon_1 = \frac{1}{2}\left(\Upsilon + (a + 1) \upsilon\right);
$$
}

\item
{
If $m \equiv 3$ (mod 8) and one of the sums, for example $x' + x \equiv 2$ (mod 4), and others are $\equiv 0$ (mod 4), then

$$
\upsilon_1 = \frac{1}{2}\left(\Upsilon + (b + c + 1)\upsilon\right),
$$
}

\item
{
If $m \equiv 3$ (mod 8) and all of the sums are $\equiv 2$ (mod 4), then

$$
\upsilon_1 = \frac{1}{2}\left(\Upsilon + \upsilon\right);
$$
}

\item
{
Otherwise $\upsilon_1 = \Upsilon$.
}
\end{enumerate}

In case if $\mu' = -\mu$, define $d = \gcd(y, z)$. If $e$, $f$ are solutions to the equation

$$
ey + fz = d,
$$

then the set of all $\rho$ satisfying the equation $\rho\mu = -\mu\rho$ is of the form (\ref{eq:all_solutions}), where

$$
\upsilon = \frac{zj - yk}{d}, \,\,\, \upsilon_1 = -di + x(ej + fk).
$$
\end{proof}

While the theorem \ref{thm:equivalence_class} tells us that the relation (\ref{eq:rhomu}) fully characterizes some particular equivalence class, the theorem \ref{thm:solutions} tells us that all pseudo generators of ideals in this equivalence class have to be of the form $\rho = \upsilon X + \upsilon_1 Y$ for some integers $X, Y$ and some fixed integral quaternions $\upsilon$, $\upsilon_1$.

\hspace{2cm}

\par Now we have all the results we need to turn our attention to the problem of ideal reduction.

\begin{defin} \label{def:reduced}
An ideal $\Rideal r$ in the quadratic order $\ord(\mu)$ is said to be \emph{reduced}, if in the set of solutions to the equation (\ref{eq:rhomu}), where $\mu' = r\mu r^{-1}$, it has the smallest positive norm.
\end{defin}

By the theorem \ref{thm:solutions}, the set of all solutions to the equation (\ref{eq:rhomu}) is equal to the $\RZ$-module $[\upsilon, \upsilon_1]$. In turn, the theorem \ref{thm:equivalence_class} tells us that any ideal $\Rideal{\upsilon X + \upsilon_1 Y}$ is equivalent to $\Rideal r$. Therefore, our goal is to find an integral quaternion of the smallest positive norm, which belongs to $[\upsilon, \upsilon_1]$.

\par It is quite straightforward to find the norm of the reduced ideal. Consider the norm of $\rho = \upsilon X + \upsilon_1 Y$:

\begin{equation} \label{eq:norm}
N(\rho)(X,Y) = N(\upsilon)X^2 + 2(\upsilon,\upsilon_1)_\times XY + N(\upsilon_1)Y^2.
\end{equation}

Since $N(\rho)(X,Y)$ is a quadratic form, then the problem of finding the smallest norm is equivalent to the problem of finding the smallest integer, representable by the form $\left(N(\upsilon), 2(\upsilon,\upsilon_1)_\times, N(\upsilon_1)\right)$. Using the reduction algorithm for quadratic forms \cite[Alg. 5.6.2]{crandall}, we can find the reduced form $(a, b, c)$, equivalent to the given one. The form $(a, b, c)$ has a property that $a$ is the smallest integer representable by this form, and therefore it is also the smallest integer representable by the original form \cite[Sec. 5.10]{buchmann}.

\par As we have seen, given an ideal $\Rideal \rho$ in the quadratic order $\ord(\mu)$, it is possible to find the norm of the reduced ideal $\Rideal r$, but not the ideal itself. The following algorithm allows us to compute a pseudo generator of the reduced ideal. Note that $\lfloor x \rceil = \lfloor x + 1/2\rfloor$ defines an integer, which is closest to $x$.

\begin{alg} \label{alg:reduction} \emph{Ideal reduction.}

\par \textbf{Input:} a quadratic order $\ord(\mu)$ and a primitive ideal $\Rideal \rho$, which belongs to this order;
\par \textbf{Output:} a reduced ideal, which is equivalent to $\Rideal \rho$.

\begin{enumerate}[1.]
\item Compute $\mu' := \rho\mu\rho^{-1}$;

\item Compute the $\RZ$-basis of the module $[\upsilon, \upsilon_1]$, $N(\upsilon) > N(\upsilon_1)$, which characterizes the set of solutions to the equation $\rho\mu = \mu'\rho$;

\item Switch $\upsilon$ and $\upsilon_1$;

\item Compute $X := \lfloor (\upsilon, \upsilon_1)_\times / N(\upsilon) \rceil$;

\item Put $\omega_1 := \omega_1 - \omega X$;

\item If $N(\upsilon) > N(\upsilon_1)$, go to step 3;

\item If $N(\upsilon) = N(\upsilon_1)$ and $2\left|(\upsilon, \upsilon_1)_\times\right| \geq N(\upsilon)$, return $\Rideal{\upsilon_1 - \upsilon}$. Otherwise return $\Rideal{\upsilon}$.
\end{enumerate}
\end{alg}

\begin{proof_cor}
\par Let $N(\upsilon_1) \geq N(\upsilon)$. Consider the norm of a quaternion $\upsilon_1 - \upsilon X$:

\begin{equation} \label{eq:norm_quadratic}
N(\upsilon_1 - \upsilon X) = N(\upsilon)X^2 - 2(\upsilon,\upsilon_1)_\times X + N(\upsilon_1).
\end{equation}

This quadratic function of one variable reaches its minimum at the point $X_0 = (\upsilon, \upsilon_1)_\times / N(\upsilon)$. However, we are interested in integral (or half-integral) $X$, s.t. $N(\upsilon_1 - \upsilon X)$ takes its smallest positive value, and $\upsilon_1 - \upsilon X$ is an integral quaternion. Note that $X$ can be a half-integer only in one of the following two cases:

\begin{enumerate}[(a)]
\item When coefficients of $\upsilon_1$ are integers, and coefficients of $\upsilon$ are integral and odd;
\item When coefficients of $\upsilon_1$ are half-integers, and coefficients of $\upsilon$ are integral and even.
\end{enumerate}

\par Let us demonstrate that during the work of algorithm \ref{alg:reduction} quaternions $\upsilon$, $\upsilon_1$ cannot satisfy neither (a), nor (b). Note that if any of the above conditions hold, then $\upsilon$ is divisible by 2, and therefore it is non-primitive. We aim to show that $\upsilon$ and $\upsilon_1$ will always be primitive.

\par The first time we reach the step 3 of our algorithm, $\upsilon$ and $\upsilon_1$ are primitive and linearly independent, because of the way they were defined in the theorem \ref{thm:solutions}. On the step 5, we define $\upsilon_1 := \upsilon_1 - \upsilon X$, so now our new $\upsilon_1$ may not be primitive. However, we can demonstrate that this is not the case.

\par Suppose that in the $\RZ$-module $[\upsilon, \upsilon_1]$ we have $\upsilon_1 = \Delta q$ for some integer $\Delta > 1$ and a primitive quaternion $q$. Since $\upsilon_1$ is a solution to the equation (\ref{eq:rhomu}), then $q$ is a solution as well. It means that $q = \upsilon X + q \Delta Y$ for some integers $X$ and $Y$, because $[\upsilon, \upsilon_1]$ completely characterizes the set of solutions of the equation (\ref{eq:rhomu}). Therefore, both of the following equalities

$$
\upsilon = \frac{1 - \Delta Y}{X} q, \,\,\, q = \frac{X}{1 - \Delta Y} \upsilon
$$

must hold at the same time. Note that $X \neq 0$, since $\Delta > 1$. Now, because $q$ is primitive, $X$ must divide $1 - \Delta Y$, and since $\upsilon$ is primitive, $1 - \Delta Y$ must divide $X$, whence $X = 1 - \Delta Y$. It implies that $\upsilon$ is equal to $q$, which contradicts the fact that $\upsilon$ and $\upsilon_1$ are linearly independent. Therefore, $\upsilon$ and $\upsilon_1$ are both primitive, which means that $X$ cannot be a half-integer.

Let us now return back to our algorithm. As it was mentioned before, the function (\ref{eq:norm_quadratic}) reaches its minimum at the point $X_0 = (\upsilon, \upsilon_1)_\times / N(\upsilon)$. Therefore, an integer $X = \lfloor X_0 \rceil$ corresponds to an integral quaternion $\upsilon_1 - \upsilon X$, which has the smallest norm. Now we can replace the $\RZ$-module $[\upsilon, \upsilon_1]$ with the equivalent $\RZ$-module $[\upsilon, \upsilon_1 - \upsilon X]$. However, if $N(\upsilon_1) > N(\upsilon)$, where $\upsilon_1 := \upsilon_1 - \upsilon X$, there is no need to go back to step 3. For if we would perform the norm reduction in the same way one more time (without swapping $\upsilon$ and $\upsilon_1$, because $N(\upsilon_1)$ is already greater than $N(\upsilon)$), we would get $X = 0$. As we shall see later, $X = 0$ actually indicates that $\Rideal \upsilon$ is reduced. In case if $N(\upsilon) = N(\upsilon_1)$, we also don't need to go back to step 3. If we consider the quadratic form (\ref{eq:norm}), which characterizes the norm of $\rho = \upsilon X + \upsilon_1 Y$, the Cauchy-Schwarz inequality holds:

$$
\left|(\upsilon, \upsilon_1)_\times\right| \leq \sqrt{N(\upsilon)\cdot N(\upsilon_1)} = N(\upsilon).
$$

It means that $X$ is either $0$ (i.e. $\Rideal \upsilon$ is reduced), or $\pm 1$, depending on the sign of $(\upsilon,\upsilon_1)_\times$. In the second case, we replace a $\RZ$-module $[\upsilon, \upsilon_1]$ with $[\upsilon_1 - \upsilon, \upsilon]$. Note that $N(\upsilon_1 - \upsilon) \leq N(\upsilon)$ because $2\left|(\upsilon,\upsilon_1)_\times\right| \geq N(\upsilon)$ (i.e. $|X| = 1$). Now, compute

$$
X = \left\lfloor \frac{(\upsilon_1 - \upsilon, \upsilon)_\times}{N(\upsilon_1 - \upsilon)} \right\rceil = \left\lfloor \frac{(\upsilon, \upsilon_1)_\times - N(\upsilon)}{N(\upsilon_1) - 2(\upsilon, \upsilon_1)_\times + N(\upsilon)} \right\rceil = \left\lfloor -\frac{1}{2} \right\rceil = 0.
$$

Because $X = 0$, we conclude that $\Rideal{\upsilon_1 - \upsilon}$ is reduced.

\par Summing up, we have to replace $[\upsilon, \upsilon_1]$ with $[\upsilon, \upsilon_1 - \upsilon X]$ only if $N(\upsilon) > N(\upsilon_1)$. When this is done, at the step 3 we swap the basis quaternions, in order to ensure that at the step 4 we have $N(\upsilon_1) > N(\upsilon)$. Note that on each iteration we ensure that the norm of one of the basis quaternions gets lessened. Since norms are bounded below by zero, and we return the result whenever $N(\upsilon_1) \geq N(\upsilon)$, our algorithm will definitely terminate.

\par Now, let us prove that if $X = 0$, then the norm of $\upsilon$ is the smallest positive integer, which can be represented by a quadratic form $\left(N(\upsilon), 2(\upsilon,\upsilon_1)_\times, N(\upsilon_1)\right)$. First of all, recall that the condition $X = \lfloor (\upsilon, \upsilon_1)_\times / N(\upsilon) \rceil = 0$ means

\begin{equation} \label{eq:X=0}
-\frac{N(\upsilon)}{2} \leq (\upsilon, \upsilon_1)_\times < \frac{N(\upsilon)}{2}.
\end{equation}

Second, suppose that there exists a quaternion $r$ s.t. $N(r) \leq N(\upsilon)$. Then there exist two integers $X$ and $Y$ s.t. $r = \upsilon X + \upsilon_1 Y$, i.e.

$$
N(r) = N(\upsilon)X^2 + 2(\upsilon, \upsilon_1)_\times XY + N(\upsilon_1)Y^2 \leq N(\upsilon).
$$

Since $(\upsilon, \upsilon_1)_\times$ satisfies the relation (\ref{eq:X=0}),  we have

$$
N(\upsilon) \geq N(r) \geq N(\upsilon)X^2 - N(\upsilon)|XY| + N(\upsilon_1)Y^2. 
$$

On the \nth{3} step, we ensure that $\upsilon_1$ has a greater norm than $\upsilon$, so the following inequality holds:

$$
N(\upsilon) \geq N(r) \geq N(\upsilon) X^2 - N(\upsilon)|XY| + N(\upsilon)Y^2 \geq 0.
$$

If we now divide all the parts of the inequality above by $N(\upsilon) \neq 0$, we get

$$
1 \geq \frac{N(r)}{N(\upsilon)} \geq X^2 - |XY| + Y^2 \geq 0.
$$

Since $X^2 - |XY| + Y^2$ takes only integral values, it has to be equal to either 0 or 1. If it is equal to zero, then $X = 0$ and $Y = 0$, because $X^2 - |XY| + Y^2$ is a positive definite quadratic form \cite[Prop. 1.2.10]{buchmann}. This implies $N(r) = 0$. On the other hand, if $X^2 - |XY| + Y^2 = 1$, then $N(r) = N(\upsilon)$. Therefore, $\upsilon$ has the smallest positive norm.
\end{proof_cor}

The following theorem describes an upper bound to the number of iterations that algorithm \ref{alg:reduction} performs in order to compute the reduced ideal.

\begin{thm} \label{thm:reduce_bound}
Let $[\alpha_0, \beta_0]$ be a $\RZ$-module, where $\alpha_0$ and $\beta_0$ are two arbitrary linearly independent integral quaternions with $A = N(\alpha_0) \geq B = N(\beta_0)$. Define an algorithm, which computes

$$
\alpha_{n + 1} = \beta_n, \,\,\, \beta_{n+1} = \alpha_n - \left\lfloor\frac{(\alpha_n, \beta_n)_\times}{N(\beta_n)}\right\rceil \beta_n,
$$

and terminates whenever $N(\beta_k) \geq N(\alpha_k)$ for some $k$. Then $k(A,B) \in O\left(\log\left|\log\frac{A}{B\varphi}\right|\right)$ as $A/B \rightarrow \varphi$, where $\varphi = (1 + \sqrt 5) / 2$. In case if $B = \lfloor A / \varphi \rfloor$, then $k(A) \in O\left(\log \frac{A}{\{A / \varphi\}}\right)$ as $A \rightarrow \infty$, where $\{x\}$ denotes the fractional part of $x$.
\end{thm}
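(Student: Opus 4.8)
\section*{Proof proposal}

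The plan is to collapse the two--dimensional recursion to a scalar one and then isolate the single marginal regime in which the golden ratio appears. First observe that $(\alpha_n,\beta_n)\mapsto(\alpha_{n+1},\beta_{n+1})$ is a unimodular change of basis of the $\RZ$-module $[\alpha_n,\beta_n]$, so the Gram determinant relative to the full scalar product $(\cdot,\cdot)_\times$ --- which is a genuine Euclidean inner product, since $(q,q)_\times=N(q)$ ---
$$\Delta = N(\alpha_n)N(\beta_n)-(\alpha_n,\beta_n)_\times^2$$
is independent of $n$, and positive because $\alpha_0,\beta_0$ are linearly independent. Writing $f_n=(\alpha_n,\beta_n)_\times/N(\beta_n)-\lfloor(\alpha_n,\beta_n)_\times/N(\beta_n)\rceil\in[-\tfrac12,\tfrac12)$ for the rounding error, a direct computation from $\alpha_{n+1}=\beta_n$, $\beta_{n+1}=\alpha_n-\lfloor(\alpha_n,\beta_n)_\times/N(\beta_n)\rceil\beta_n$ gives $N(\alpha_{n+1})=N(\beta_n)$ and
$$N(\beta_{n+1}) = \frac{\Delta}{N(\beta_n)}+N(\beta_n)\,f_n^2,$$
so the termination test $N(\beta_{n+1})\ge N(\alpha_{n+1})=N(\beta_n)$ reads $\Delta/N(\beta_n)^2+f_n^2\ge 1$. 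Everything now depends only on the one sequence $b_n:=N(\beta_n)$.

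Next I would pass to the scale-invariant quantities $g_n:=\Delta/b_n^2$ and $t_n:=N(\alpha_n)/N(\beta_n)=b_{n-1}/b_n$: the recursion becomes $t_{n+1}=(g_n+f_n^2)^{-1}$, $g_{n+1}=g_nt_{n+1}^2$, with $f_n$ the rounding error of $s_n=\pm\sqrt{t_n-g_n}=(\alpha_n,\beta_n)_\times/b_n$, and the run continues exactly while $t_n>1$, i.e. while $g_{n-1}+f_{n-1}^2<1$. For $g_n$ small this is, up to the perturbation $g_n$, the nearest-integer continued-fraction map applied to $\sqrt{t_n-g_n}$, and the key structural fact is that the perturbation grows monotonically: $g_{n+1}/g_n=t_{n+1}^2>1$ at every step of the run. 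I would then prove a geometric-decay lemma: while $g_n$ stays below a fixed threshold $<\tfrac34$, the constraint $|f_n|\le\tfrac12$ keeps $g_n+f_n^2$ bounded away from $1$, so $t_{n+1}$, hence $g_{n+1}/g_n$, exceeds a constant $\kappa>1$, and $g_n$ at least $\kappa$-folds per step until it leaves the threshold region. Since $\Delta$ is the squared covolume of a rank-two sublattice of $\iquat$, it is bounded below by an absolute positive constant, and this already yields the crude bound $k(A,B)=O\bigl(\log(B^2/\Delta)\bigr)=O(\log A)$ --- the right order for the worst case, and consistent with the second assertion.

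To get the stated bounds I would analyse the only regime where the geometric decay degrades: the orbit approaching the fixed configuration of the $f_n\equiv\pm\tfrac12$ map, namely $b_n\to 2\sqrt{\Delta/3}$, $g_n\to\tfrac34$, $t_n\to 1$; one checks that in terms of the input norms this is precisely the regime $A/B\to\varphi$ (and, in the integral case, $B=\lfloor A/\varphi\rfloor$ with small $\{A/\varphi\}$), which is how $\varphi$ enters. Linearising the map $(t_n,g_n)\mapsto(t_{n+1},g_{n+1})$ at this fixed point, the differential has determinant $1$ and trace $-1$, so its eigenvalues are the primitive cube roots of unity: the linear part is a rotation of order three, neither contracting nor expanding. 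What I want to extract is that, although the orbit does not contract onto this configuration, the rotation carries it out of the thin ``still running'' region $\{\,t_n>1,\ g_n+f_n^2<1\,\}$ within $O(1)$ steps of entering a neighbourhood of it; converting the size of that neighbourhood into a step count --- and using the Fibonacci-type relation $|F_{k+1}/F_k-\varphi|\asymp\varphi^{-2k}$, which is what turns a proximity into a logarithm --- then produces the stated $O(\log|\log(A/(B\varphi))|)$ behaviour, with the sharp constant in the integral case governed by how closely $A/\varphi$ approaches an integer, i.e. by $\{A/\varphi\}$.

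The main obstacle is exactly this marginal analysis. Because the linearisation is neutral (eigenvalues on the unit circle), first-order information gives neither termination nor a rate, so one must combine the second-order Taylor term of the recursion with the arithmetic fact that the $b_n$ are integers (or half-integers), which quantises from below the distance to the fixed configuration. A secondary nuisance is that near this configuration the orbit necessarily sits where $s_n$ is close to a half-integer --- exactly the discontinuity of $\lfloor\cdot\rceil$ --- so the casework on which side of the rounding one lands has to be tracked carefully. Everything else (the covolume invariant, the geometric-decay lemma, and the reduction of the step count to a logarithm of a size ratio) is routine.
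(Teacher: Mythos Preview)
Your route is genuinely different from the paper's. The paper never introduces the Gram invariant $\Delta=N(\alpha_n)N(\beta_n)-(\alpha_n,\beta_n)_\times^2$ or your clean scalar recursion $b_{n+1}=\Delta/b_n+b_nf_n^2$; instead it argues by a case split on the size of $N(\alpha_n)-N(\beta_{n+1})$. When this drop is smaller than $N(\beta_n)$ it shows directly that the algorithm halts within two further steps; when it is at least $N(\beta_n)$ it passes to the extremal sub-case of equality, which gives the \emph{linear} recursion $A_{n+1}=B_n$, $B_{n+1}=A_n-B_n$ on the norms. The Fibonacci numbers then appear literally as the coefficients in $A_n,B_n$, the continuation condition becomes the nested system of intervals $[f_{2m}/f_{2m-1},f_{2m+1}/f_{2m}]\ni A/B$, and the two asymptotic statements drop out of the Binet formula and $\ln(1+x)\approx x$. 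Your invariant-and-dynamics viewpoint is more conceptual and would transfer to other reduction schemes; the paper's is entirely elementary and ad hoc to this recursion.

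There is, however, a concrete gap in your marginal step. The fixed configuration you single out, $(t,g,f)=(1,\tfrac34,\pm\tfrac12)$, sits on the termination boundary $t=1$, but it is \emph{not} the configuration selected by $A/B\to\varphi$. Along the paper's extremal (Fibonacci) trajectory one has $t_n\equiv\varphi$ throughout; the relation $g_n+f_n^2=1/t_{n+1}=1/\varphi$ is then constant while $g_{n+1}=g_n\varphi^2$ grows geometrically, so $s_n^2=t_n-g_n\to\varphi-1/\varphi=1$ and $f_n\to 0$, not $\pm\tfrac12$. In other words, the worst case is governed by the regime $(t,g,f)\approx(\varphi,1/\varphi,0)$ with $g_n$ escaping, not by an orbit lingering near your $(1,\tfrac34,\pm\tfrac12)$; there is no genuine fixed point of the $(t,g)$ map here, and in particular the linearisation you compute (trace $-1$, determinant $1$, cube-root-of-unity eigenvalues) is about the wrong object. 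If you want to salvage the dynamical approach, the quantity that actually controls the step count is how long $g_n=g_0\varphi^{2n}$ can stay below $1/\varphi$ together with how closely $A/B$ approximates $\varphi$ --- which is exactly the Fibonacci-interval bookkeeping the paper does directly.
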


\begin{proof}
Let us estimate the difference between the norms of quaternions $\alpha_n$ and $\beta_{n+1}$:

$$
N(\alpha_n) - N(\beta_{n+1}) = N(\beta_n)\left\lfloor\frac{(\alpha_n,\beta_n)_\times}{N(\beta_n)}\right\rceil\left(2\frac{(\alpha_n,\beta_n)_\times}{N(\beta_n)} - \left\lfloor\frac{(\alpha_n,\beta_n)_\times}{N(\beta_n)}\right\rceil\right) \geq 0.
$$

Note that the difference can be either less than, or greater than, or equal to $N(\beta_n)$. Consider the first case. Then

$$
\left|2\frac{(\alpha_n,\beta_n)_\times}{N(\beta_n)} - \left\lfloor\frac{(\alpha_n,\beta_n)_\times}{N(\beta_n)}\right\rceil\right| < 1.
$$

Note that for the function $f(x) = |2x - \lfloor x \rceil|$ the inequality $0 \leq f(x) < 1$ holds only for $-1 < x < 1$. Therefore, $-1 < (\alpha_n, \beta_n)_\times / N(\beta_n) < 1$, and $\lfloor (\alpha_n, \beta_n)_\times / N(\beta_n) \rceil = \pm 1$, depending on the sign of $(\alpha_n, \beta_n)_\times$. We do not consider the case $\lfloor (\alpha_n, \beta_n)_\times / N(\beta_n) \rceil  = 0$, because it indicates the termination of our algorithm. Suppose that $(\alpha_n, \beta_n)_\times \geq 0$ (we can prove the opposite case analogously). Then

$$
\alpha_{n + 1} = \beta_n, \,\,\,\,\, \beta_{n + 1} = \alpha_n -  \left\lfloor\frac{(\alpha_n,\beta_n)_\times}{N(\beta_n)}\right\rceil \beta_n = \alpha_n - \beta_n;
$$

$$
\alpha_{n + 2} = \beta_{n + 1} = \alpha_n - \beta_n, \,\,\,\,\, \beta_{n + 2} = \alpha_{n+1} - \left\lfloor\frac{(\alpha_{n+1},\beta_{n+1})_\times}{N(\beta_{n+1})}\right\rceil \beta_{n+1}.
$$

Let us simplify the integer occurring in $\beta_{n+2}$:

$$
\left\lfloor\frac{(\alpha_{n+1},\beta_{n+1})_\times}{N(\beta_{n+1})}\right\rceil = \left\lfloor\frac{(\beta_n, \alpha_n - \beta_n)_\times}{N(\alpha_n) - 2(\alpha_n,\beta_n)_\times + N(\beta_n)} + \frac{1}{2}\right\rfloor =
$$

$$
= \left\lfloor \frac{N(\alpha_n) - N(\beta_n)}{2\left(N(\alpha_n) - 2(\alpha_n,\beta_n)_\times + N(\beta_n)\right)} \right\rfloor = \left\lfloor\frac{1}{2} - \frac{N(\beta_n) - (\alpha_n, \beta_n)_\times}{N(\alpha_n) -2(\alpha_n,\beta_n)_\times + N(\beta_n)}\right\rfloor.
$$

If we now look closer at the difference under the floor sign, we notice that it actually varies from 0 to $1/2$, excluding $1/2$, because $N(\alpha_n) > N(\beta_n)$. This allows us to conclude that the integer we have been computing is actually zero, and therefore $\beta_{n + 2} = \alpha_{n + 1} = \beta_n$. Summarizing, we have

$$
\begin{array}{l l l}
\alpha_n, & \alpha_{n+1} = \beta_n, & \alpha_{n+2} = \alpha_n - \beta_n;\\
\beta_n, & \beta_{n+1} = \alpha_n - \beta_n, & \beta_{n + 2} = \beta_n.
\end{array}
$$

We conclude that whenever the norm of $N(\alpha_n)$ gets reduced by less than $N(\beta_n)$, our algorithm will terminate either on the $(n+1)$-st, or on the $(n+2)$-nd iteration.

\par Now, consider the second case, when $N(\alpha_n) - N(\beta_{n+1}) \geq N(\beta_n)$. Let us assume the worst case, when on each iteration of the algorithm the norm of $N(\alpha_n)$ gets reduced exactly by $N(\beta_n)$. Denote $A_n = N(\alpha_n)$, $B_n = N(\beta_n)$, and let $f_n$ for natural $n$ denote the $n$-th element of the Fibonacci sequence, i.e. $f_1 = 1$, $f_2 = 1$, $f_{n + 2} = f_{n+1} + f_n$. Then on the first six iterations, our algorithm produces the result, demonstrated in the table below. Note that the \nth{4} column demonstrates which inequality has to be satisfied in order for the algorithm to proceed to the next iteration, and the \nth{5} column demonstrates an interval, which $A/B$ belongs to if two consecutive iterations did not result in a termination of the algorithm.

$$
\begin{array}{c | c | c | c | c}
n & A_n & B_n & A_n > B_n & \textnormal{Interval for A/B}\\
\hline
1 & A & B & f_1 A > f_2 B & \multirow{2}{*}{$\frac{f_3}{f_2} \geq \frac{A}{B} \geq \frac{f_2}{f_1}$}\\
2 & B & A - B & f_3 B > f_2 A\\
\hline
3 & f_1 A - f_2 B & f_3 B - f_2 A & f_3 A > f_4 B& \multirow{2}{*}{$\frac{f_5}{f_4} \geq \frac{A}{B} \geq \frac{f_4}{f_3}$}\\
4 & f_3 B - f_2 A & f_3 A - f_4 B & f_5 B > f_4 A\\
\hline
5 & f_3 A - f_4 B & f_5 B - f_4 A & f_5 A > f_6 B & \multirow{2}{*}{$\frac{f_7}{f_6} \geq \frac{A}{B} \geq \frac{f_6}{f_5}$}\\
6 & f_5 B - f_4 A & f_5 A - f_6 B & f_7 B > f_6 A\\
\hline
& ... & & ... &
\end{array}
$$

\begin{center}
\small
\par Table 1
\end{center}

\par In general, it is not hard to show that, depending on the parity of $n$, for $n > 2$ we get

$$
\begin{array}{c  c  c}
n \,\,\, \textnormal{is odd:} & \alpha_n = f_{n - 2}A - f_{n - 1}B, & \beta_n = f_n B - f_{n - 1}A;\\
n \,\,\, \textnormal{is even:} & \alpha_n = f_{n - 1}B - f_{n - 2}A, & \beta_n = f_{n - 1} A - f_n B.
\end{array}
$$

\par Using the famous relation $f_{n-1}f_{n+1}-f_n^2 = (-1)^n$, which holds for any $n \geq 2$, we can demonstrate that an interval, defined on the iteration $2m+2$, is a subset of an interval, defined on the iteration $2m$:

$$
\left[\frac{f_{2m+2}}{f_{2m+1}} ; \frac{f_{2m + 3}}{f_{2m + 2}}\right] \subset \left[\frac{f_{2m}}{f_{2m - 1}} ; \frac{f_{2m+1}}{f_{2m}}\right].
$$

\par So as we can see, our interval gets shorter with each iteration, and our algorithm terminates whenever $A/B$ falls out of the interval.

\par But we can go even further: recall that $\lim\limits_{n \rightarrow \infty} f_{n+1}/f_n = \varphi$, where $\varphi$, known as the \emph{golden ratio}, is equal to $(1+\sqrt 5)/2$. Since this sequence converges, any subsequence of it must converge as well, which implies that $\{f_{2m}/f_{2m-1}\}_{m =1 }^\infty$ and $\left\{f_{2m+1}/f_{2m}\right\}_{m = 1}^\infty$ both tend to $\varphi$. But the first sequence is monotonously increasing, while the second one is monotonously decreasing. It means that each interval of the form $[f_{2m} / f_{2m - 1} ; f_{2m+1} / f_{2m}]$ must contain $\varphi$. As a consequence, we can see that the closer $A/B$ to $\varphi$, the more iterations our algorithm has to perform.

\par There are two separate cases arise at this point: when $A/B$ is less than, or greater than $\varphi$. If $A / B < \varphi$, then  $A / B < \varphi < f_{2m+1}/f_{2m}$ is true for any $m$, which means that the algorithm cannot terminate at the even iteration. Analogously, if $A/B > \varphi$, then $f_{2m}/f_{2m-1} < \varphi < A/B$, and the algorithm cannot terminate at the odd iteration.

\par Note that we expect out algorithm to terminate when $A_k \geq B_k$ for some $k$. Assume that $A/B > \varphi$, which means that $k = 2m$ for some positive integer $m$. Recall the exact formula for the $n$-th element of the Fibonacci sequence:

$$
f_n = \frac{\varphi^n - (1 - \varphi)^n}{\sqrt 5}.
$$

Then for $n$, satisfying $A / B \geq f_{2m + 1} / f_{2m}$, we have

$$
\frac{A}{B} \geq \frac{\varphi^{2m + 1} - (1 - \varphi)^{2m + 1}}{\varphi^{2m} - (1 - \varphi)^{2m}},
$$

\par If we now take the logarithm on both parts of the inequality above, we shall get

$$
(2m + 1)\ln \varphi + \ln\left(1 + \left(\frac{\varphi - 1}{\varphi}\right)^{2m + 1}\right) - 2m \ln \varphi - \ln\left(1 - \left(\frac{\varphi - 1}{\varphi}\right)^{2m}\right) \leq \ln \frac{A}{B};
$$

Since $\ln(1 + x) \approx x$ for $x \ll 1$, and $((\varphi - 1)/\varphi)^{2m}$ tends to zero as $m$ goes to infinity, then for big values of $m$ the following inequality holds:

$$
\left(\frac{\varphi - 1}{\varphi}\right)^{2m + 1} + \left(\frac{\varphi - 1}{\varphi}\right)^{2m} \leq \ln\frac{A}{B\varphi},
$$

which, in turn, results in

$$
\left(\frac{\varphi - 1}{\varphi}\right)^{2m} \leq \frac{\varphi}{2\varphi - 1}\ln\frac{A}{B\varphi}.
$$

If we now take the logarithm on both sides of the inequality above, we shall obtain

$$
k = 2m \geq \frac{\ln\frac{2\varphi - 1}{\varphi \ln (A/(B\varphi))}}{\ln\frac{\varphi}{\varphi - 1}}.
$$

Let

$$
a = \left(\ln\frac{\varphi}{\varphi - 1}\right)^{-1} \approx 1.039, \,\,\,\,\, b = \frac{\ln\frac{2\varphi - 1}{\varphi}}{\ln \frac{\varphi}{\varphi - 1}} \approx 0.336.
$$

Then

$$
k \geq -a\ln\ln\frac{A}{B\varphi} + b,
$$

which means that for any even $k$, which satisfies the inequality above, the relation $A/B \geq f_{k+1}/f_k$ holds. In other words, the closest even integer greater than $-a\ln\ln A/(B\varphi) + b$ will be the least upper bound for $k$, which allows us to conclude that $k(A, B) \in O\left(\log\left|\log \frac{A}{B\varphi}\right|\right)$ as $A/B \rightarrow \varphi$. Here we used the module sign because in case if $A/B < \varphi$, the value of $\ln A / (B\varphi)$ is negative. We also expect the value $\ln\left|\ln\frac{A}{B\varphi}\right|$ to be negative.

\par Now, let's see how bad our algorithm will perform, if we will make $A/B$ as close to $\varphi$ as possible. If $A / B > \varphi$, this happens when $B = \lfloor A / \varphi \rfloor$. In this case,

$$
\ln\ln\frac{A / \varphi}{\lfloor A / \varphi \rfloor} = \ln\ln\left(1 + \frac{\{A / \varphi\}}{\lfloor A / \varphi \rfloor}\right) \approx \ln\frac{\{A / \varphi\}}{\lfloor A / \varphi \rfloor} = -\ln \frac{\lfloor A / \varphi \rfloor}{\{A / \varphi\}}.
$$

We conclude that in this case

$$
k \geq a\ln\frac{\lfloor A / \varphi \rfloor}{\{ A / \varphi\}} + b,
$$

and therefore $k(A) \in O\left(\log \frac{A}{\{A / \varphi\}}\right)$ as $A$ approaches infinity.
\end{proof}

\begin{rem}
From the theorem \ref{thm:reduce_bound}, it follows that the algorithm \ref{alg:reduction} executes in $O\left(\log\log\frac{N(\upsilon_1)}{N(\upsilon)\varphi}\right)$, assuming that $N(\upsilon_1) \geq N(\upsilon)$. If $N(\upsilon_1)/N(\upsilon)$ approximates $\varphi$ in the best possible manner, i.e. $N(\upsilon) = \lfloor N(\upsilon_1) / \varphi \rfloor$, then the algorithm works in $O\left(\log\frac{N(\upsilon_1)}{\{N(\upsilon_1) / \varphi\}}\right)$.
\end{rem}

\section{Generalizing the result of Fermat} \label{sec:fermat}

Recall the Fermat factorization method: suppose that we know a couple of different representations of a positive integer $m$ as a sum of two squares:

$$
m = x_0^2 + y_0^2 = x_1^2 + y_1^2, \,\,\, x_0 \geq y_0 \geq 0, \,\,\, x_1 \geq y_1 \geq 0, \,\,\, x_0 > x_1.
$$

Then $1 < \gcd(x_0y_1 - y_0x_1, m) < m$ \cite[Sec. 5.6.2]{crandall}. Same formula holds if we know a couple of different representations of $m$ as the sum of a square and a doubled square: $m = x_0^2 + 2y_0^2 = x_1^2 + 2y_1^2$.

\par The result of Fermat is a special case of the theory, introduced in this paper. Consider a quadratic order $\ord(\mu)$ with $\mu = xi + yj + zk$, where $N(\mu) = x_0^2 + y_0^2 + z_0^2$ is composite. Then by the theorem \ref{thm:ambiguous}, the pseudo generator of an ambiguous ideal $\Rideal \rho$ has to divide $(2/r)\mu$ from the right, and therefore $N(\rho)$ has to divide $(2/r)^2m$. Since $\rho$ satisfies the relation (\ref{eq:rhomu}) for some $\mu' = x_1i + y_1j + z_1k$ of the same norm as $\mu$, it is evident that for any three squares representation of an integer there exists another three squares representation, which allows us to factor $(2/r)^2 m$. The following couple of theorems demonstrate that for quaternions $\mu = x i + yj$ and $\mu = xi + yj + yk$ and an ambiguous ideal $\Rideal \rho$, $\mu'$ has to be of a certain form.

\begin{thm} \label{thm:two_squares}
Consider a quadratic order $\ord(\mu)$, where $\mu = x_0 i + y_0 j + z_0 k$, and exactly one of the coefficients $x_0$, $y_0$, $z_0$ is equal to zero. The equivalence class $[\Rideal \rho]$ is ambiguous if and only if for the quaternion $\mu' = \rho\mu\rho^{-1} = x_1 i + y_1 j + z_1 k$ exactly one of the coefficients $x_1$, $y_1$, $z_1$ is zero.
\end{thm}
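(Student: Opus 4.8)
The plan is to rephrase ``$[\Rideal{\rho}]$ is ambiguous'' as a condition on the right order $\ord(\mu')$, and then extract the shape of $\mu'$ from the geometry of conjugation by units. By definition $[\Rideal{\rho}]$ is ambiguous iff $\Rideal{\rho}\sim\overline{\Rideal{\rho}}$. First I would record the equivalence: two ideals of $\ord(\mu)$ are equivalent if and only if their right orders are equivalent quadratic orders. The forward implication is Theorem \ref{thm:equivalence_class}; for the converse, if $\Rideal{\rho},\Rideal{\sigma}$ have right orders $\ord(\mu_a),\ord(\mu_b)$ with $\mu_b=\varepsilon\mu_a\overline\varepsilon$, then $\varepsilon\rho$ and $\sigma$ are both solutions of $x\mu=\mu_b x$, hence by Theorem \ref{thm:solutions} (and the fact that solutions of a fixed equation $x\mu=\nu x$ are pseudo generators of pairwise equivalent ideals, while $\Rideal{\varepsilon\rho}=\Rideal{\rho}$) the ideals $\Rideal{\rho}$ and $\Rideal{\sigma}$ are equivalent. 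So $[\Rideal{\rho}]$ is ambiguous iff the right order of $\overline{\Rideal{\rho}}$ is equivalent to $\ord(\mu')$. I would also note up front that $m=N(\mu)=x_0^2+y_0^2$ with $x_0,y_0\neq0$ is squarefree and $\geq2$, hence not a perfect square and (looking at sums of two squares modulo $8$) $m\not\equiv3,7\pmod8$; this keeps the hypotheses of the cited theorems in force and is used again at the end.

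The key step is to compute the right order of $\overline{\Rideal{\rho}}$. By the $i,j,k$-symmetry I may assume the zero coordinate of $\mu$ is the third, so $\mu=x_0i+y_0j$; the multiplication table gives $k\mu k^{-1}=k^{-1}\mu k=-\mu$. Hence $k\omega k^{-1}=(r-1+k\mu k^{-1})/r=\overline\omega$, so conjugation by $k$ is a ring automorphism of $\ord(\mu)=[1,\omega]$ carrying $\omega$ to $\overline\omega$; being a nontrivial automorphism of this quadratic order, it coincides with $q\mapsto\overline q$ on $\ord(\mu)$. Therefore $\overline{\Rideal{\rho}}=k\,\Rideal{\rho}\,k^{-1}=\Rideal{k\rho k^{-1}}$, whose right order is $\ord(\kappa)$ with
$$
\kappa=(k\rho k^{-1})\mu(k\rho k^{-1})^{-1}=k\rho(k^{-1}\mu k)\rho^{-1}k^{-1}=-k(\rho\mu\rho^{-1})k^{-1}=-k\mu'k^{-1}.
$$
So $[\Rideal{\rho}]$ is ambiguous iff $\ord(\mu')\sim\ord(-k\mu'k^{-1})$, i.e. $-k\mu'k^{-1}=\varepsilon\mu'\overline\varepsilon$ for some unit $\varepsilon$; setting $\delta=k^{-1}\varepsilon$, which still ranges over all $24$ units, this reads $-\mu'=\delta\mu'\overline\delta$. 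Thus ambiguity of $[\Rideal{\rho}]$ is equivalent to $\ord(\mu')\sim\ord(-\mu')$.

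Finally I would determine when $-\mu'=\delta\mu'\overline\delta$ has a unit solution. Conjugation by a unit acts on pure quaternions as one of the $12$ rotations forming the image of $\iquat^\times$ in $SO(3)$: the identity, the three rotations by $\pi$ about the coordinate axes (from $\pm i,\pm j,\pm k$), and the eight rotations by $\pm2\pi/3$ about the four main diagonals (from the units $(\pm1\pm i\pm j\pm k)/2$). Such a rotation carries a nonzero vector $v$ to $-v$ only if it is a rotation by $\pi$ about an axis orthogonal to $v$, and among these rotations such an axis exists exactly when $v$ lies in a coordinate plane, i.e. when $v$ has a zero coordinate. Taking $v=\mu'$, this shows $[\Rideal{\rho}]$ is ambiguous iff $\mu'$ has a zero coordinate; and since $N(\mu')=m$ is squarefree and $\geq2$, $\mu'$ has at most one zero coordinate, so this is the same as ``exactly one of $x_1,y_1,z_1$ equals zero''.

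The hard part is the middle paragraph: spotting that the coordinate-plane hypothesis makes conjugation by a single unit ($k$, resp. $i$ or $j$) realize the nontrivial automorphism of $\ord(\mu)$, which is exactly what collapses the right order of $\overline{\Rideal{\rho}}$ to $-k\mu'k^{-1}$. Without this one is forced to compare the right order of $\Rideal{\rho}$ with its left order $\ord(\rho'^{-1}\mu\rho')$ through the awkward relationship between $\gcd_r(a,b+\omega)$ and $\gcd_l(a,b+\omega)$. A secondary point to get right is the converse of Theorem \ref{thm:equivalence_class} used in the first paragraph.
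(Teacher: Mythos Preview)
Your proof is correct and takes a genuinely different route from the paper's.

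The paper treats the two implications separately and computationally. For ($\Rightarrow$) it picks an ambiguous representative, uses Theorem~\ref{thm:ambiguous} to write $2\mu=\rho'\rho$, observes that $2\mu i\in\RZ[k]$, and exploits unique factorization in $\RZ[k]$ to produce a $\rho$ (of the required norm) lying in the subring generated by $i,j$; then $\mu'=\rho\mu\rho^{-1}$ visibly has $z_1=0$. For ($\Leftarrow$) it takes the explicit pseudo generator $\rho=i(\mu+\mu')/d$, checks $(\mu,\rho)=0$, and feeds this into relation~(\ref{eq:main_relation_rho}) to read off an ambiguous $\RZ$-basis.

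Your argument is unified and structural: the hypothesis that $\mu$ lies in a coordinate plane is used \emph{only} to identify a single unit (your $k$) whose conjugation realizes the nontrivial automorphism of $\ord(\mu)$, so that $\overline{\Rideal{\rho}}=\Rideal{k\rho k^{-1}}$ and the right order of the conjugate ideal is $\ord(-k\mu'k^{-1})$. Ambiguity then collapses to the purely group-theoretic condition $\ord(\mu')\sim\ord(-\mu')$, which you settle by classifying which of the twelve tetrahedral rotations negate a given pure vector. This handles both directions at once, avoids the UFD step and the formula~(\ref{eq:main_relation_rho}), and makes transparent why the companion Theorem on $\mu=x_0i+y_0(j+k)$ should admit a parallel treatment (with $(j-k)/\sqrt{2}$ playing the role of $k$, though one must then use a unit of norm~$2$ rather than~$1$). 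The cost is the converse of Theorem~\ref{thm:equivalence_class}, which you flag and which the paper itself invokes (citing Venkov) in the reduction section; your derivation of it from Theorem~\ref{thm:solutions} together with $\Rideal{\varepsilon\rho}=\Rideal{\rho}$ is the right way to close that loop.
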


\begin{proof}
($\Rightarrow$)\footnote{The necessary condition was proved by Mikhail N. Kubensky.} Let $\Rideal \rho$ be an ambiguous ideal, which belongs to the order $\ord(\mu)$, where $\mu = x_0 i + y_0 j$ and $m = N(\mu) = x_0^2 + y_0^2$. Note that in here we have picked $\mu$ so that $z_0 = 0$, but in general we can zero out any of the coefficients by replacing $\ord(\mu)$ with a suitable quadratic order of the form $\ord(\varepsilon \mu \overline{\varepsilon})$ for some unit $\varepsilon$.

\par By the theorem \ref{thm:ambiguous}, $\rho$ has to divide $(2/r)\mu = 2\mu$ from the right, i.e. $2\mu = \rho'\rho$ for some integral quaternion $\rho'$. Then $2\mu' = 2(x_1i + y_1j + z_1k) = \rho(2\mu)\rho^{-1} = \rho\rho'$. Let us demonstrate that $z_1 = 0$.

\par First of all, notice that $2\mu i = -x_0 - y_0 k$ is an element of the ring of Gaussian integers $\RZ[k]$. Since $\RZ[k]$ is a unique factorization domain \cite[Chap. 1]{mollin}, the quaternion $2\mu i$ of norm $2m = N(\rho')N(\rho)$ can be uniquely represented as the product of two integral quaternions $r = p_0 + q_0k$ and $r' = p_1 + q_1 k$, which are also in $\RZ[k]$, and $N(r) = N(\rho')$, $N(r') = N(\rho')$:

$$
2\mu i = \rho'\rho i = r'r.
$$

Since $2\mu = r'(-ri)$, put

$$
\rho = -ri = -p_0 i + q_0j, \,\,\, \rho' = r' = p_1 + q_1 k.
$$

Then $\mu' = \rho\rho'$ has $z_1 = 0$.

\par ($\Leftarrow$) Let $\mu = x_0 i + y_0 j$, $\mu' = x_1 i + y_1 j$, and $N(\mu) = N(\mu')$. Once again, we have picked our $\mu$ and $\mu'$ so that $z_0 = z_1 = 0$, but this is not necessary, as we can always find suitable quaternions $\varepsilon_0 \mu \overline{\varepsilon_0}$ or $\varepsilon_1 \mu' \overline{\varepsilon_1}$ for some units $\varepsilon_0$, $\varepsilon_1$, which would make $z_0 = z_1 = 0$.

\par By the theorem \ref{thm:ideal_generated_by_orders}, an integral quaternion $\rho' = (\mu + \mu')/d$, where $d$ is the greatest integral divisor of $\mu + \mu'$, is a right pseudo generator of a primitive ideal $[N(\rho), b + \mu] \subseteq \ord(\mu)$ for some integer $b$. Since $\rho'$ is a right pseudo generator of an ideal, then $\rho = i\rho'$ is a pseudo generator of the same ideal:

$$
\rho = i\rho' = \frac{-(x_0 + x_1) + (y_0 + y_1)k}{d}.
$$

Obviously, we have $(\mu,\rho) = 0$, which means that the relation (\ref{eq:main_relation_rho}) takes the form

$$
\Re(\rho)b = t\Re(\xi)\frac{N(\rho)}{t},
$$

where $t$ is defined as in step 2 of the algorithm \ref{alg:restore}. Since $\gcd(\Re(\rho), N(\rho)/t) = 1$ (otherwise $\rho$ would not be primitive), we have $b = kN(\rho) / t$ for an integer $k = t\Re(\xi)/\Re(\rho)$. It implies that the $\RZ$-basis of $\Rideal \rho$ takes the form $[N(\rho), kN(\rho)/t + \mu]$. In turn, this ideal is equivalent to $[N(\rho), \frac{t - 1}{t}N(\rho) + \mu]$, which is ambiguous.
\end{proof}

\begin{thm}
Consider a quadratic order $\ord(\mu)$, where $\mu = x_0 i + y_0 j + z_0 k$, and exactly two of the coefficients $x_0$, $y_0$, $z_0$ are equal to each other in their absolute value. The equivalence class $[\Rideal \rho]$ is ambiguous if and only if for the quaternion $\mu' = \rho\mu\rho^{-1} = x_1 i + y_1 j + z_1 k$ exactly two of the coefficients $x_1$, $y_1$, $z_1$ are equal to each other in their absolute value.
\end{thm}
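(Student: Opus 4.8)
The plan is to run the proof of Theorem~\ref{thm:two_squares} almost verbatim, with two substitutions: the ring of Gaussian integers $\RZ[k]$ is replaced by $\RZ[j-k]$, which satisfies $(j-k)^2=-2$ and is therefore isomorphic to the Euclidean ring $\RZ[\sqrt{-2}]$ (hence a unique factorization domain); and the binary form $x^2+y^2$ is replaced by $x^2+2y^2$, i.e. the ``square plus a doubled square'' representation mentioned at the start of this section. First I would normalize: conjugating $\ord(\mu)$ by a unit $\varepsilon$ replaces $\Rideal\rho$ by an equivalent ideal and $\mu$ by $\varepsilon\mu\overline\varepsilon$, and since these maps only permute and change signs of the coordinates, the hypothesis ``exactly two coordinates coincide in absolute value'' is preserved; so I may assume the coinciding pair is $(y,z)$ with $y_0=z_0$, i.e. $\mu=x_0i+y_0(j+k)$, $m=N(\mu)=x_0^2+2y_0^2$, and likewise $\mu'=x_1i+y_1(j+k)$ in the converse. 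The computation underpinning everything is $\mu i=-x_0+y_0(j-k)\in\RZ[j-k]$, together with $i(j-k)=j+k$ and $(j+k)(j-k)=-2i$.

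For ($\Rightarrow$): since the class $[\Rideal\rho]$ is ambiguous it contains an ideal $\mathfrak b$ with $\mathfrak b=\overline{\mathfrak b}$, and by Theorem~\ref{thm:equivalence_class} the right order attached to any pseudo generator of an ideal in this class is determined up to equivalence; as the target property (two coordinates equal in absolute value) is invariant under $q\mapsto\varepsilon q\overline\varepsilon$, it suffices to treat $\Rideal\rho=\mathfrak b$. By Theorem~\ref{thm:ambiguous}, $\rho$ divides $(2/r)\mu$ from the right, say $(2/r)\mu=\rho'\rho$; right-multiplying by $i$ gives $(2/r)\mu i=\rho'(\rho i)$ with $(2/r)\mu i\in\RZ[j-k]$ of norm $(2/r)^2m=N(\rho')N(\rho)$. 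Because $m=x_0^2+2y_0^2$ is squarefree, every odd prime dividing $N(\rho)$ also divides $m$ and hence splits or ramifies in $\RZ[\sqrt{-2}]$, while $2$ can divide $N(\rho)$ at most once (an ideal of norm $4$ or $8$ in $\ord(\mu)$ would force $-m$ to be a square modulo $4$ or $8$, which it is not); so $N(\rho)$ is a norm from $\RZ[j-k]$, and by unique factorization I may write $(2/r)\mu i=r'r$ with $r,r'\in\RZ[j-k]$ primitive, $N(r)=N(\rho)$, $N(r')=N(\rho')$. Setting $\rho_0:=-ri$ (a primitive right divisor of $(2/r)\mu$ of norm $N(\rho)$, hence a pseudo generator of an ideal in the ambiguous class, by the same argument as in Theorem~\ref{thm:two_squares}), and writing $r=p_0+q_0(j-k)$, $r'=p_1+q_1(j-k)$, I would compute
$$
(2/r)\,\rho_0\mu\rho_0^{-1}=(-ri)r'=-(p_0p_1+2q_0q_1)\,i+(q_0p_1-p_0q_1)(j+k),
$$
so the order attached to $\rho_0$, hence (by Theorem~\ref{thm:equivalence_class}) the class of $\mu'$, has equal $j$- and $k$-coordinates; ``exactly two'' coincide, since all three coinciding would force $m=3$.

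For ($\Leftarrow$): with $\mu,\mu'$ normalized as above, Theorem~\ref{thm:ideal_generated_by_orders} shows $\rho':=(\mu+\mu')/d$, with $d$ the largest integral divisor of $\mu+\mu'$, is a right pseudo generator of a primitive ideal $[N(\rho),b+\omega]\subseteq\ord(\mu)$; since $\rho'$ has the shape $(\ast)i+(\ast)(j+k)$, its left-associate $\rho:=i\rho'$ has the shape $(\ast)+(\ast)(j-k)\in\RZ[j-k]$, whence $(\mu,\rho)=0$. Substituting $(\mu,\rho)=0$ into \eqref{eq:main_relation_rho} (with the parity parameter $t$ of Algorithm~\ref{alg:restore}) leaves $\Re(\rho)\bigl(b+\tfrac{r-1}{r}\bigr)=\Re(\xi)N(\rho)$; primitivity of $\rho$ gives $\gcd(\Re(\rho),N(\rho)/t)=1$, so $b$ is forced and $\Rideal\rho=[N(\rho),cN(\rho)/t+\omega]$ for an integer $c$, an ideal equivalent to an ambiguous one, so $[\Rideal\rho]$ is ambiguous.

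The main obstacle is the factorization step in ($\Rightarrow$): one must check carefully that $N(\rho)$ really is a norm from $\RZ[\sqrt{-2}]$ and that $(2/r)\mu i$ admits a factorization $r'r$ inside $\RZ[j-k]$ realizing the split $N(\rho')N(\rho)$ with both factors primitive --- equivalently, that modulo a left unit $\rho$ may be taken to lie in $\RZ[j-k]$ after multiplication by $i$, so that $\Rideal{-ri}$ sits in the ambiguous class. This is where the constraints that $-m$ be a quadratic residue modulo $N(\rho)$ and that $N(\rho)$ be odd when $m\equiv 3\pmod 8$ (cf.\ Theorem~\ref{thm:ideal_generated_by_orders}) are used, together with the $r=2$ bookkeeping; it should be carried out exactly as in Theorem~\ref{thm:two_squares}. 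Everything else is routine quaternion arithmetic.
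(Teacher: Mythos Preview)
Your proposal is correct and follows exactly the approach the paper takes: the paper's own proof merely states that $\mu i=-x_0+y_0(j-k)\in\RZ[j-k]\cong\RZ[\sqrt{-2}]$ is a UFD and that ``the same reasoning as in the proof of theorem~\ref{thm:two_squares} can be applied'' for both directions. You have simply unpacked that reasoning in detail, including the explicit computation $(-ri)r'=-(p_0p_1+2q_0q_1)i+(q_0p_1-p_0q_1)(j+k)$ and the orthogonality $(\mu,\rho)=0$ in the converse --- and you correctly flag the factorization-splitting step as the only place requiring care, which the paper leaves implicit even in Theorem~\ref{thm:two_squares}.
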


\begin{proof}
($\Rightarrow$) If we have $\mu = x_0i + y_0(j + k)$, then $\mu i = -x_0 + y_0(j - k)$ belongs to the ring $\RZ[j - k]$, which is isomorphic to $\RZ[\sqrt{-2}]$. Since $\RZ[\sqrt{-2}]$ is a unique factorization domain, the same reasoning as in the proof of the theorem \ref{thm:two_squares} can be applied to demonstrate the desirable result.

\par ($\Leftarrow$) The sufficient condition can be proved using almost the same reasoning as in the theorem \ref{thm:two_squares}. The only contrast between two proves would be in the difference of initial quaternions $\mu$ and $\mu'$.
\end{proof}

\section{Some relations, which characterize an ideal}

\par In this section, we present three formulas, which characterize a primitive ideal $\Rideal \rho = [a, b + \omega]$ in the quadratic order $\ord(\mu)$. The first formula that we are about to introduce has the same form as (\ref{eq:main_relation_rho}), except that $\rho$ gets replaced by $\xi$, which satisfies $b + \omega = \xi\rho$. To prove it, we first need to demonstrate the following proposition.

\begin{prop} \label{prop:equal_scalar_products_xi}
Consider $\Rideal \rho = [a, b + \omega]$ in the quadratic order $\ord(\mu)$. Then $(\omega,\xi) = (\omega',\xi)$, where $\omega' = \rho\omega\rho^{-1}$, and $\xi$ satisfies the relation $b + \omega = \xi\rho$.
\end{prop}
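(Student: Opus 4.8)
The plan is to peel off the (irrelevant) real parts and then reduce everything to Proposition \ref{prop:equal_scalar_products_rho}.

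First I would note that the scalar product $(\cdot,\cdot)$ is bilinear and depends only on the vector parts of its arguments. Since $\omega = (r-1+\mu)/r$ and, by the discussion around relation (\ref{eq:rhomu}), $\omega' = \rho\omega\rho^{-1} = (r-1+\mu')/r$ with $\mu' = \rho\mu\rho^{-1}$, both $\omega$ and $\omega'$ have the real number $(r-1)/r$ as their real part, and their vector parts are $\mu/r$ and $\mu'/r$ respectively. Hence $(\omega,\xi) = \tfrac1r(\mu,\xi)$ and $(\omega',\xi) = \tfrac1r(\mu',\xi)$, so it suffices to prove $(\mu,\xi) = (\mu',\xi)$.

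The key step is to produce the commutation relation $\xi\mu' = \mu\xi$, which plays the role for $\xi$ that (\ref{eq:rhomu}) plays for $\rho$. Since $b+\omega = \xi\rho$ belongs to $\ord(\mu)$, it commutes with $\mu$, giving $\xi\rho\mu = \mu\xi\rho$; substituting $\rho\mu = \mu'\rho$ from (\ref{eq:rhomu}) turns this into $\xi\mu'\rho = \mu\xi\rho$, and cancelling the invertible quaternion $\rho$ on the right yields $\xi\mu' = \mu\xi$. (Equivalently one may start from $b+\omega' = \rho\xi \in \ord(\mu')$, which commutes with $\mu'$, as in the proof of Proposition \ref{prop:main_prop}.)

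Finally I would invoke Proposition \ref{prop:equal_scalar_products_rho} with the substitution $(\rho,\mu,\mu')\mapsto(\xi,\mu',\mu)$: its hypotheses hold because $\Re(\mu)=\Re(\mu')=0$ and $\xi\mu' = \mu\xi$, so its conclusion reads $(\mu',\xi) = (\mu,\xi)$; combined with the first paragraph this gives $(\omega',\xi) = (\omega,\xi)$. There is no serious obstacle here — the only points requiring care are the justification that $\xi\rho = b+\omega$ commutes with $\mu$ and that $\rho$ may be cancelled. If one prefers a self-contained argument not citing Proposition \ref{prop:equal_scalar_products_rho}, one can instead split $\xi$ and $\mu,\mu'$ into real plus vector parts in $\xi\mu'=\mu\xi$, apply relation (\ref{eq:product}), and take real parts, which is exactly the computation carried out in the proof of that proposition.
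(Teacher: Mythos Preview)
Your argument is correct. The reduction to $(\mu,\xi)=(\mu',\xi)$ is immediate, the derivation of the commutation relation $\xi\mu'=\mu\xi$ from $(b+\omega)\mu=\mu(b+\omega)$ together with (\ref{eq:rhomu}) is clean, and Proposition~\ref{prop:equal_scalar_products_rho} applies verbatim under the substitution $(\rho,\mu,\mu')\mapsto(\xi,\mu',\mu)$ since $\Re(\mu)=\Re(\mu')=0$.

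This is, however, a genuinely different route from the paper's. The paper does not isolate the relation $\xi\mu'=\mu\xi$ at all; instead it starts from $\xi(b+\omega)=\xi^2\rho$ and $(b+\omega')\xi=\rho\xi^2$, expands $\xi^2$ via the characteristic equation (\ref{eq:quat_relation}), subtracts to obtain $\xi\omega-\omega'\xi=2\Re(\xi)(\omega-\omega')$, and only then splits into real and vector parts and applies (\ref{eq:product}) to read off the scalar identity. Your approach is shorter and more conceptual: it recognizes that $\xi$ satisfies a commutation relation of exactly the same shape as (\ref{eq:rhomu}) (with $\mu$ and $\mu'$ interchanged), so the entire computation of Proposition~\ref{prop:equal_scalar_products_rho} can be reused wholesale rather than redone. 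The paper's approach, on the other hand, is self-contained and makes no appeal to the earlier proposition, at the cost of a slightly longer calculation.
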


\begin{proof}
Consider two equalities: $\xi(b + \omega) = \xi^2\rho$ and $(b + \omega')\xi = \rho\xi^2$. Since every quaternion satisfies the relation (\ref{eq:quat_relation}), we have

$$
\xi(b + \omega) = \xi^2\rho = \left( 2\Re(\xi)\xi - N(\xi)\right)\rho = 2\Re(\xi)(b + \omega) - N(\xi)\rho;
$$

$$
(b + \omega')\xi = \rho\xi^2 = \rho\left(2\Re(\xi)\xi - N(\xi)\right) = 2\Re(\xi)(b + \omega') - N(\xi)\rho.
$$

Let us subtract the second equation from the first one:

$$
\xi(b + \omega) - (b + \omega')\xi = 2\Re(\xi)(\omega - \omega');
$$

$$
\xi\omega - \omega'\xi = 2\Re(\xi)(\omega - \omega').
$$

Consider the quaternion on the left side of the equation:

$$
\xi\omega - \omega'\xi = \left(\Re(\xi) + \vec\xi\right)(\Re(\omega) + \vec\omega) - (\Re(\omega') + \vec{\omega'})\left(\Re(\xi) + \vec\xi\right) = 
$$

$$
= \Re(\xi)\Re(\omega) - \Re(\xi)\Re(\omega') + \left(\vec\xi\vec\omega - \vec{\omega'}\vec\xi\right) + \left(\Re(\xi)\vec\omega + \Re(\omega)\vec\xi - \Re(\xi)\vec{\omega'} - \Re(\omega')\vec\xi\right).
$$

Since $\Re(\omega) = \Re(\omega')$,

$$
\xi\omega - \omega'\xi = \left(\vec\xi\vec\omega - \vec{\omega'}\vec\xi\right) + \Re(\xi)(\vec\omega - \vec{\omega'}) = 2\Re(\xi)(\omega - \omega').
$$

Applying the relation (\ref{eq:product}), we obtain

$$
-(\omega,\xi) + [\xi,\omega] + (\omega',\xi) - [\omega',\xi] + \Re(\xi)\left(\vec\omega - \vec{\omega'}\right) = 2\Re(\xi)\left(\omega - \omega'\right).
$$

Since the quaternion on the right has a zero real part, if we now take the real part on both sides of the equation above, we get $(\omega,\xi) = (\omega',\xi)$.
\end{proof}

\begin{prop} \label{prop:main_prop_xi}
If a primitive ideal $[a, b + \omega] = \Rideal \rho$ belongs to the quadratic order $\ord(\mu)$, then

\begin{equation} \label{eq:main_relation_xi}
(\omega, \xi) +\Re(\xi)\left(b + \frac{r - 1}{r}\right) = \Re(\rho)N(\xi).
\end{equation}

where $\xi$ is an integral quaternion, which satisfies the relation $b + \omega = \xi\rho$, and an integer $r$ is defined as in (\ref{eq:omega}).
\end{prop}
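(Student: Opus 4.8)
The plan is to mirror the proof of Proposition~\ref{prop:main_prop} verbatim in structure, but to pair the vector identity coming from $b+\omega=\xi\rho$ with $\xi$ instead of with $\rho$, and to invoke Proposition~\ref{prop:equal_scalar_products_xi} in place of Proposition~\ref{prop:equal_scalar_products_rho}. First I would observe that the system~(\ref{eq:system}) derived in the proof of Proposition~\ref{prop:main_prop} is available here without change, since the hypotheses are identical: with $\omega'=\rho\omega\rho^{-1}=(r-1+\mu')/r$ where $\rho\mu=\mu'\rho$, multiplying $b+\omega=\xi\rho$ by $\rho$ on the left and right gives $b+\omega'=\rho\xi$, and then the identity~(\ref{eq:qrrq}) applied to $\rho\xi+\xi\rho$ yields
$$b=\Re(\rho)\Re(\xi)-(\rho,\xi)-\frac{r-1}{r},\qquad \frac{\omega+\omega'}{2}=\Re(\xi)\vec\rho+\Re(\rho)\vec\xi+\frac{r-1}{r}.$$

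Next I would take the scalar product of the second equation with $\xi$. Because the scalar product only sees vector parts, the real summand $(r-1)/r$ contributes nothing, $(\vec\rho,\xi)=(\rho,\xi)$, and $(\vec\xi,\xi)=(\xi,\xi)=N(\xi)-\Re(\xi)^2$, so
$$\left(\frac{\omega+\omega'}{2},\xi\right)=\Re(\xi)(\rho,\xi)+\Re(\rho)\bigl(N(\xi)-\Re(\xi)^2\bigr).$$
By Proposition~\ref{prop:equal_scalar_products_xi} we have $(\omega,\xi)=(\omega',\xi)$, hence the left-hand side equals $(\omega,\xi)$. Finally I would substitute $(\rho,\xi)=\Re(\rho)\Re(\xi)-b-(r-1)/r$ from the first equation of the system; the terms $\Re(\rho)\Re(\xi)^2$ cancel, and rearranging the rest produces exactly
$$(\omega,\xi)+\Re(\xi)\left(b+\frac{r-1}{r}\right)=\Re(\rho)N(\xi),$$
which is the claimed identity~(\ref{eq:main_relation_xi}).

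There is no real obstacle: the substantive work has already been done in Proposition~\ref{prop:equal_scalar_products_xi} (the needed equality of scalar products), in the identity~(\ref{eq:qrrq}), and in the system~(\ref{eq:system}) established for Proposition~\ref{prop:main_prop}. The only point deserving a moment of care is to confirm that the $\omega'$ appearing in Proposition~\ref{prop:equal_scalar_products_xi} coincides with the $\omega'$ in that system, i.e. that $b+\omega=\xi\rho$ forces $b+\omega'=\rho\xi$ with $\omega'=\rho\omega\rho^{-1}$; once this is noted, the remainder is a short, mechanical substitution perfectly symmetric to the one in Proposition~\ref{prop:main_prop}.
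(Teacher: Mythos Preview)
Your proposal is correct and follows exactly the approach the paper intends: it reuses the system~(\ref{eq:system}) from Proposition~\ref{prop:main_prop}, pairs the vector equation with $\xi$ rather than $\rho$, and invokes Proposition~\ref{prop:equal_scalar_products_xi} to identify $\left(\frac{\omega+\omega'}{2},\xi\right)$ with $(\omega,\xi)$ before substituting and simplifying. This is precisely what the paper's one-line proof sketch (``can be shown in the same manner as in the proof of Proposition~\ref{prop:main_prop}'') is asking the reader to carry out.
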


\begin{proof}
If we use the fact from proposition \ref{prop:equal_scalar_products_xi} that $(\omega,\xi) = (\omega,\xi')$, the relation (\ref{eq:main_relation_xi}) can be shown in the same manner as in the proof of the proposition \ref{prop:main_prop}.
\end{proof}

Next two relations follow from the fact that an integral quaternion $\omega + \omega'$ will always be a root of the the equation $\rho\omega = \omega'\rho$. By the proposition \ref{prop:equal_scalar_products_rho}, $(\omega + \omega', \omega) = (\omega + \omega', \omega')$. It follows that

$$
\left(\frac{\omega + \omega'}{2}, \omega\right) = \left(\frac{\omega + \omega'}{2}, \frac{\omega + \omega'}{2}\right) = \frac{m + (\mu,\mu')}{r^2},
$$

since $(\omega,\omega) = (\omega',\omega') = m / r^2$, where $m = N(\mu) = N(\mu')$. If we now apply the \nth{2} relation from the system (\ref{eq:system}), we shall obtain

$$
\left(\frac{\omega + \omega'}{2}, \frac{\omega + \omega'}{2}\right) = \left(\Re(\xi)\vec\rho + \Re(\rho)\vec\xi+ \frac{r - 1}{r}, \Re(\xi)\vec\rho + \Re(\rho)\vec\xi+ \frac{r - 1}{r}\right) =
$$

$$
= \Re(\rho)^2(\xi,\xi) + 2\Re(\rho)\Re(\xi)(\rho,\xi) + \Re(\xi)^2(\rho,\rho);
$$

$$
\left(\frac{\omega + \omega'}{2}, \omega\right) = \left(\Re(\xi)\vec\rho + \Re(\rho)\vec\xi + \frac{r-1}{r}, \omega\right) = 
$$

$$
= \Re(\xi)(\omega,\rho) + \Re(\rho)(\omega,\xi) = \frac{\Re(\xi)(\mu,\rho) + \Re(\rho)(\mu,\xi)}{r}.
$$

In the end, we obtain the following couple of relations:

\begin{equation}
\frac{m + (\mu,\mu')}{r} = \Re(\xi)(\mu,\rho) + \Re(\rho)(\mu,\xi);
\end{equation}

\begin{equation}
\frac{m + (\mu,\mu')}{r^2} = \Re(\rho)^2(\xi,\xi) + 2\Re(\rho)\Re(\xi)(\rho,\xi) + \Re(\xi)^2(\rho,\rho).
\end{equation}

\section{Number of ambiguous classes, characterized by the equation $\rho\mu = -\mu\rho$} \label{sec:count_ambiguous}

From the theorem \ref{thm:ideal_generated_by_orders}, we know that each ideal gets generated by a couple of quadratic orders. Among these ideals, some of them get generated by orders $\ord(\mu)$ and $\ord(-\mu)$. In our research, we were interested in the following question: how many nontrivial \emph{ambiguous} ideals get generated by such quadratic orders? To answer it, we have written a program, which counts up to some bound $N$ how many integers, which satisfy certain parameters mentioned below, have at least one ambiguous ideal of that form.

\par First of all, we have to emphasize that it is not always the case that $\ord(\mu)$ and $\ord(-\mu)$ are not equivalent. Normally, if $m = x^2 + y^2 + z^2$ and $x$, $y$, $z$ are all distinct and non-zero, out of a single three squares representation we can produce 48 different, including the original one. These representations can be obtained by switching signs and by permuting $x$, $y$, $z$. On the other hand, if $\mu = xi + yj + zk$, there exist exactly 12 equivalent quadratic orders of the form $\ord(\varepsilon \mu \overline{\varepsilon})$. Now it is not hard to observe that if $x$, $y$, $z$ are different and non-zero, quaternions $\mu$, $-\mu$, $\tilde \mu$ and $-\tilde\mu$, where $\tilde\mu = xi + zj + yk$, will form quadratic orders, which are not equivalent to each other.

\par There are two special cases arise at this point: if $\mu = xi + yj + zk$ and $y = z$, then $\tilde\mu = \mu$, i.e. quadratic orders, generated by these quaternions, are equivalent. Analogous observation can be made when $x = z$ or $x = y$. If $\mu = xi + yj + zk$ and $z = 0$, then $-\mu = k\mu\overline{k}$, which means that $\ord(\mu)$ and $\ord(-\mu)$ are equivalent to each other. Once again, analogous observation can be made when $x = 0$ or $y = 0$.

\par As a result, we have decided to avoid two squares representations, for they will always generate trivial ambiguous ideals. We can also argue that the case $\mu = xi + yj + zk$ when $y = z$ will also produce a trivial result. Recall the theorem \ref{thm:solutions}. In its proof, we have demonstrated how to compute the $\RZ$-module $[\upsilon, \upsilon_1]$, which fully characterizes the set of solutions to the equation $\rho\mu = -\mu\rho$. If we would follow the procedure described in that proof, we could notice that $\upsilon = j - k$. Because $\ord(\mu)$ and $\ord(-\mu)$ are not equivalent, the norm $N(\upsilon) = 2$ is the smallest, which means that $\Rideal \upsilon$ is reduced. When $m \equiv 3$ (mod 8), from the theorem \ref{thm:ideal_generated_by_orders} we know that such an ideal does not belong to $\ord(\mu)$. In the opposite case, from the theorem \ref{thm:ambiguous} we know that $N(\upsilon) = 2$ has to divide $(2/r^2)m = 4m$, which obviously corresponds to the trivial case. This is why we ignore those $\mu = xi + yj + zk$ with $y = z$, as well as the cases when $x = y$ or $x =  z$, which produce the trivial result for the same reason.

\par The table below demonstrates our computational data. In our experiment, we were considering only those integers, which are a) squarefree; b) not prime; and c) not congruent to 7 mod 8 (since these integers cannot be represented as a sum of three squares). Let us denote the set of those integers as $\Sigma$. Among the elements of $\Sigma$, our algorithm was looking for those $m$, which have at least one quaternion $\mu = xi + yj + zk$ with the norm $m = N(\mu)$, s.t. quadratic orders $\ord(\mu)$ and $\ord(-\mu)$ generate a non-trivial ambiguous ideal. We denote this set as $A$. The \nth{4} column demonstrates what is the percentage of elements of $A$ in respect to the number of elements of $\Sigma$. The \nth{5} column shows an integer less than $N$, which has the maximal number $M$ of non-trivial ambiguous ideals, generated by $\rho\mu = -\mu\rho$. The number $M$ is indicated in brackets. Finally, the \nth{6} column gives an example of an element $m = x^2 + y^2 + z^2$ in $A$ and a three squares representation ($x$,$y$,$z$), which generates an ambiguous ideal, while the \nth{7} column gives an example of an integer, which is in $\Sigma \setminus A$.

\begin{center}
\footnotesize
\begin{tabular}{| c | c | c | c | c | c | c |}
\hline
$N$ & \# $\Sigma$ & \# $A$ & \% & Max \# &  E.g. $A$ & E.g. $\Sigma \setminus A$\\
\hline
\hline
$10^3$ & 379 & 151 & 39.84 & 645 (4) & 21 (4,2,1) & 6\\
\hline
$10^4$ & 4145 & 1853 & 44.70 & 2310 (8) & 1001 (26,15,10) & 1002\\
\hline
$10^5$ & 43464 & 20584 & 47.36 & 90321 (32) & 10001 (98,19,6) & 10003\\
\hline
$10^6$ & 447767 & 220308 & 49.20 & 899745 (64) & 100002 (281,121,80) & 100001\\
\hline
$10^7$ & 4567729 & 2302087 & 50.40 & 899745 (64) & 1000001 (770,630,101) & 1000006\\
\hline
\end{tabular}

\small
\par Table 2
\end{center}

There are two interesting tendencies that we can notice in this table. First of all, the overall percentage of numbers which have ambiguous ideals generated by $\rho\mu = -\mu\rho$ is growing. Second, each integer less than $N$, which has a maximal number $M$ of such an ambiguous classes, has $M$ equal to some power of $2$. We can look closer at this sequence, and ask: is there a relation between the number of these ambiguous classes, and the ideal class group? The following table demonstrates that such a correlation seem to exist:

\begin{center}
\begin{tabular}{| c | c | c | c | l |}
\hline
$N$ & $M$ & $\Delta$ & $h_\Delta$ & $Cl_\Delta$\\
\hline
\hline
21 & 1 & -84 & 4 & 2 2\\
\hline
105 & 2 & -420 & 8 & 2 2 2\\
\hline
645 & 4 & -2580 & 16 & 4 2 2\\
\hline
2310 & 8 & -9240 & 32 & 4 2 2 2\\
\hline
10605 & 16 & -42420 & 64 & 4 4 2 2\\
\hline
90321 & 32 & -361284 & 256 & 8 4 2 2 2\\
\hline
899745 & 64 & -3598980 & 512 & 8 4 2 2 2 2\\
\hline
\end{tabular}

\small
\par Table 3
\normalsize
\end{center}

\par Here $\Delta$ denotes the discriminant of a maximal quadratic order of an imaginary number field $\QF{-N}$, $h_\Delta$ is the class number, and $Cl_\Delta$ denotes the ideal class group. Following the definition \cite[Sec. 7.1]{jacobson}, we write the structure of each ideal class group as a direct product of cyclic subgroups, i.e.

$$
Cl_\Delta \cong C(m_1) \times \ldots \times C(m_s),
$$

where the positive integers $m_1, \ldots, m_s$ (which are presented in the \nth{5} column) satisfy $m_1 \geq 1$, $m_{j + 1} \,\, | \,\, m_j$ for $1 \leq j < s$, and $C(x)$ denotes the cyclic group of order $x$. As we can notice, for each $N$, the corresponding value of $h_\Delta$ is a power of 2, and the class group is non-cyclic.

\section{A new approach to find a divisor of a class number}

Fix the squarefree integer $m \not \equiv 7$ (mod 8), and consider a maximal quadratic order $\ord$ with discriminant $\Delta$ of an imaginary number field $\QF{-m}$. If the class number $h_\Delta = ef$ for some positive integers $e$, $f$, there exists a subgroup $A$ of the ideal class group $Cl_\Delta$, which consists of $f$ equivalence classes with reduced ideals $\mathfrak a_1, \mathfrak a_2, \ldots, \mathfrak a_f$ as their representatives. If we now generate an integral quaternion $\mu_1$ s.t. $\mu_1^2 = -m$ and replace $\sqrt{-m}$ in the $\RZ$-basis of each ideal $\mathfrak a_i$ (where $1 \leq i \leq f$) with $\mu_1$, then we will ``move'' the subgroup $A$ from $\ord$ to $\ord(\mu_1)$. By the theorem \ref{thm:ideal_generated_by_orders}, each ideal $\mathfrak a_i$ in $\ord(\mu_1)$ generates some quadratic order $\ord(\mu_i)$ by means of relation (\ref{eq:rhomu}). Moreover, by the theorem \ref{thm:equivalence_class}, we know that all $\mu_1, \ldots, \mu_f$ are distinct, in a sense that no two integral quaternions from this set generate equivalent quadratic orders. Hence, there exists a bijective correspondence between $\mathfrak a_1, \ldots, \mathfrak a_f$ and $f$ distinct equivalence classes of quadratic orders with $\ord(\mu_1), \ldots, \ord(\mu_f)$ taken as their representatives (assuming that initially all ideals belong to $\ord(\mu_1)$) \cite[\textsection 18]{ven4}.

\par Now, pick an ideal $\mathfrak a \in A$ of order $f > 1$, i.e. $f$ is the smallest positive integer s.t. $\mathfrak [a]^f = 1$ holds\footnote{Here $[\mathfrak a]$ denotes the equivalence class of $\mathfrak a$, and $1$ denotes the unit in $Cl_\Delta$, which is simply $[\ord]$.}. Then $\mathfrak a$ in $\ord(\mu)$ generates a new quadratic order, $\ord(\mu')$. Replacing $\mu$ in the $\RZ$-basis of $\mathfrak a$ with $\mu'$, we will ``move'' $\mathfrak a$ to $\ord(\mu')$. Once again, $\mathfrak a$ in $\ord(\mu')$ generates a new quadratic order $\ord(\mu'')$. If we now proceed in this fashion, at some point our $\mathfrak a$ will return to $\ord(\mu)$, generating a cycle of length $f$ \cite[\textsection 19]{ven4}. This cycle will run through all of the $\ord(\mu_1), \ldots, \ord(\mu_f)$, which were mentioned in the previous paragraph.

\par Consider a problem of finding the length $f$ of a cycle, which is equivalent to the problem of finding the order of $\mathfrak a$. Observe that for any quaternion $\mu_i$ for $1 \leq i \leq f$, a quadratic order $\ord(\mu_i)$ is equivalent to some quadratic order $\ord(\mu_i')$, where $\mu_i' = \varepsilon\mu_i\overline{\varepsilon}$ for some unit $\varepsilon$ has either all non-negative, or all non-positive coefficients. We shall call $\ord(\mu)$ \emph{positive} if the first case holds, and \emph{negative} if the second case holds\footnote{Note that for the quaternion $\mu = xi + yj + zk$, where exactly one of the coefficients $x$, $y$, $z$ is equal to zero, the order $\ord(\mu)$ is both positive and negative.}.

\begin{defin}
Consider an ideal $\mathfrak a$ in $\ord(\mu_1)$, which generates a cycle $\ord(\mu_1), \ldots, \ord(\mu_f)$. Let $\mathfrak a$ satisfy the \emph{separation property} if each $\ord(\mu_i)$ is positive (negative) when $1 \leq i \leq \lfloor f / 2 \rfloor$, and negative (positive) when $\lfloor f / 2 \rfloor < i \leq f$. Then we call $\mathfrak a$ \emph{separated} in $\ord(\mu_1)$.
\end{defin}

\par It is not hard to demonstrate that if we know an ideal, which satisfies the separation property in some $\ord(\mu_1)$, then we can find the length $f$ of its cycle in polynomial time. The idea is to find borders between positive and negative quadratic orders. As you can notice, there exist exactly two of them: between $\ord(\mu_{\lfloor f / 2\rfloor})$ and $\ord(\mu_{\lfloor f / 2 \rfloor + 1})$, and between $\ord(\mu_f)$ and $\ord(\mu_1)$.

\par Suppose that $\mathfrak a$ is separated in a positive order $\ord(\mu_1)$. Then, using the right pseudo generator of $\mathfrak a$, we compute $\ord(\mu_2)$ generated by $\mathfrak a$ in $\ord(\mu_1)$, $\ord(\mu_4)$ generated by $\mathfrak a^2$ in $\ord(\mu_2)$, and so on, until for some $n$ we obtain a negative order $\ord(\mu_n)$ generated by $\mathfrak a^{n / 2}$ in $\ord(\mu_{n/2})$. We conclude that $n/2 < d \leq n$. Now we shall move backwards, and try to tighten this interval. We ``move'' our ideal $\mathfrak a$ to $\ord(\mu_n)$, and now consider its left pseudo generator, which will direct us towards $\ord(\mu_1)$. Once again, we compute $\ord(\mu_{n - 1})$ generated by $\mathfrak a$ in $\ord(\mu_n)$, $\ord(\mu_{n - 3})$ generated by $\mathfrak a^2$ in $\ord(\mu_{n-1})$, and so on, until for some $m$ we get a positive order $\ord(\mu_{n - m})$ generated by $\mathfrak a^{(m+1)/2}$ in $\ord(\mu_{n + 1 - (m+1)/2})$. We conclude that $n - m < d \leq n + 1 - (m+1)/2$. If we proceed in this fashion, then on each iteration an interval which $d$ belongs to will get smaller and smaller, until it reaches the size where we can check the sign of each order in the interval.

\par On the picture below, we introduce an example for $f = 12$. Here, our distance to the border was found in three steps, which are demonstrated by arrows. Initially, the only thing we know about $d$ is that $d \geq 1$. After three jumps to the left on the first step, we conclude that $3 < d \leq 7$. On the second step, after two jumps to the right, we conclude that $4 < d \leq 6$. Finally, on the third step we find $4 < d \leq 5$, which means that $d = 5$.

\begin{center}
\includegraphics[scale=0.35, trim = -1cm 1cm -50mm 1cm]{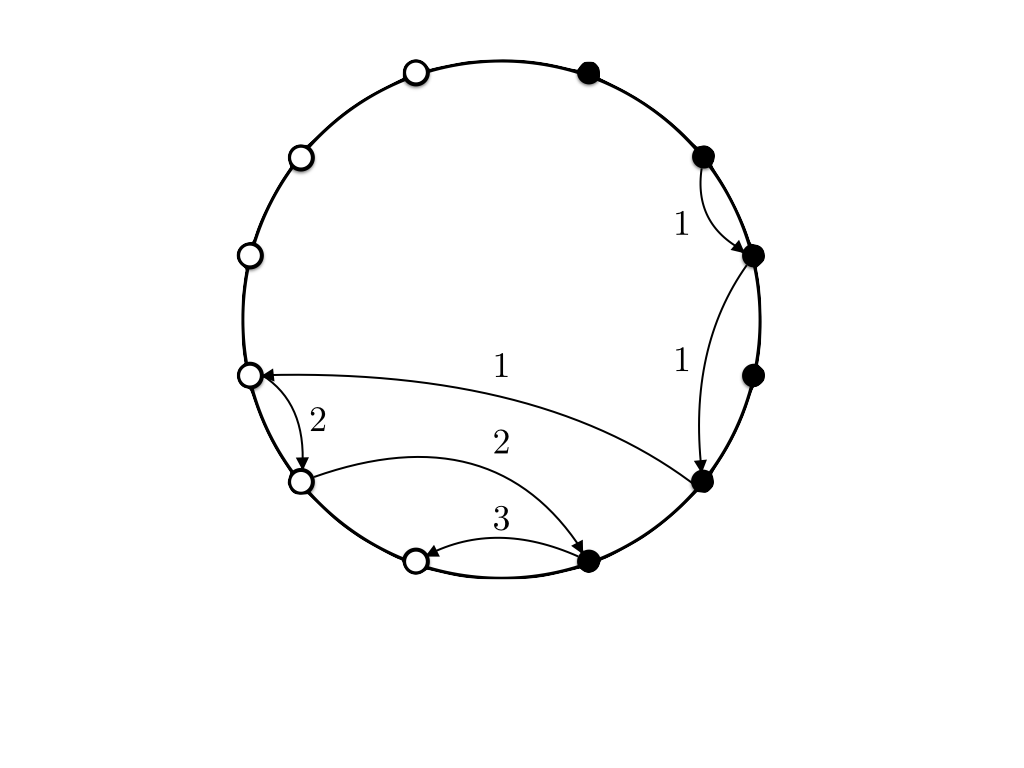}
\vspace{-2.5cm}
\par Pic. 1
\end{center}

\par Note that above we have found a distance from $\mu_1$ to only one of the borders. So what about the other border? We could find it analogously if from the very beginning we would start to compute quaternions, generated by $\mathfrak a$, $\mathfrak a^2$, etc. not with the right pseudo generator of $\mathfrak a$, but with the left pseudo generator.

\par There is one problem that may occur during our computations: for the reason that on each iteration we double the size of our step, it may happen that the step becomes bigger than the length of a cycle. If this happens, our procedure may result in wrong computations, or even in an infinite loop. Fortunately, there exists an upper bound on the size of a class number of an imaginary number field \cite[Sec. 5.10.1]{cohen3}. In particular, we know that

$$
h_\Delta < \frac{1}{\pi}\sqrt{|\Delta|}\ln |\Delta| \,\,\, \textnormal{if} \,\,\, \Delta < -4.
$$

As soon as our step becomes greater than the value above, we can simply pick a different $\ord(\mu_1)$, for example by replacing it with its right (or left) neighbour.

\par Let us give an example of an ideal, which satisfies the property described above. Let $\mathfrak a = \left[23, -2 + \sqrt{-893}\right]$ be an ideal in $\ord$ of $\QF{-893}$. If we now fix $\mu_1 = 29i + 4j + 6k$ (note that $893 = 29^2 + 4^2 + 6^2$), and then ``move'' $\mathfrak a$ to $\ord(\mu_1)$, i.e. $\mathfrak a = \left[23, -2 + \mu_1\right]$, then $\mathfrak a$ would generate the following cycle:

\setlength{\tabcolsep}{12pt}

\begin{center}
\begin{tabular}{l l}
$\mu_{14} = -28i -10j -3k$ & $\mu_1 = 29i + 4j + 6k$\\
$\mu_{13} = -21i -16j - 14k$ & $\mu_2 = 3i + 22j + 20k$\\
$\mu_{12} = -27i -10j -8k$ & $\mu_3 = 13i + 20j + 18k$\\
$\mu_{11} = -11i -24j - 14k$ & $\mu_4 = 13i + 18j + 20k$\\
$\mu_{10} = -11i -14j -24k$ & $\mu_5 = 3i + 20j + 22k$\\
$\mu_9 = -27i -8k -10k$ & $\mu_6 = 29i + 6j + 4k$\\
$\mu_8 = -21i -14j -16k$ & $\mu_7 = 28i + 10j + 3k$
\end{tabular}
\end{center}

Another separated ideal is $[18, 1 + \mu_1]$, where $\mu_1 = 42i + 14j + k$ ($1961 = 42^2 + 14^2 + 1^2$). It generates the following cycle:

\begin{center}
\begin{tabular}{l l}
$\mu_8 = -30i -10j -31k$ & $\mu_1 = 42i + 14j + k$\\
$\mu_7 = -14i -26j -33k$ & $\mu_2 = 18i + 26j + 31k$\\
$\mu_6 = -26i -14j -33k$ & $\mu_3 = 26i + 18j + 31k$\\
$\mu_5 = -10i - 30j - 31k$ & $\mu_4 = 14i + 42j + k$\\
\end{tabular}
\end{center}

If we consider the same quadratic order $\ord(\mu_1)$, and pick an ideal $\mathfrak b = [5, 2 + \mu_1]$, we can notice that $[\mathfrak b] = [\mathfrak a]^3$ does not satisfy the separation property:

\begin{center}
\begin{tabular}{l l}
$\mu_8 = -26i -14j -33k$ & $\mu_1 = 42i + 14j + k$\\
$\mu_7 = 26i + 18j + 31k$ & $\mu_2 = 14i + 42j + k$\\
$\mu_6 = -30i -10j -31k$ & $\mu_3 = -14i -26j - 33k$\\
$\mu_5 = -10i - 30j - 31k$ & $\mu_4 = 18i + 26j + 31k$\\
\end{tabular}
\end{center}

Many questions arise at this point. First of all, do separated ideals exist in $\ord(\mu)$  for any squarefree $N(\mu)$? Second, can we say anything meaningful about their quantity? Third, how hard is it to find a separated ideal? Probably, these ideals satisfy some other interesting properties that need to be explored. We leave all these questions open, hoping to dedicate more time to the study of separated ideals in the near future.

\section{Conclusion}

\par In this paper, we have introduced four algorithms, which allow us to manipulate ideals in the ring of integral quaternions using only their pseudo generators, without ever mentioning their $\RZ$-basis. In the future, we would like to extend our algorithms to the generalized quaternion algebras, possibly defined over a different field. We have also demonstrated the theorem, which shows that the Fermat's number factoring technique, which uses two different representations of an integer as a sum of two squares, is a special of case of the theory, presented in this paper. In general, if we have a single three squares representation for a composite integer $m = x_0^2 + y_0^2 + z_0^2$, there must exist another three squares representation $m = x_1^2 + y_1^2 + z_1^2$, which can help us to factor $m$. But the more interesting fact is that this is not always necessary, as sometimes a \emph{single} three squares representation can produce a non-trivial factor of $m$. This phenomenon was studied in the section \ref{sec:count_ambiguous}.

\par There are multiple questions left open at this point. What is the necessary, or, even better, sufficient condition for an integer $N$ to contain an ambiguous ideal, generated by the equation $\rho\mu = -\mu\rho$? Why each integer $N$, with a number $M$ of those ambiguous classes greater than any integer less than $N$, has $M$ being a power of 2? Why for each $N$ that satisfy this property, the class number $h_\Delta$ also turns out to be a power of 2? Finally, how can we find separated ideals within a certain ideal class group? We intend to explore all these questions in more depth in the future.

\subsection*{Acknowledgements}

This research was done for the author's Master's thesis, which was submitted to the faculty of higher mathematics of the St. Petersburg National Research University of Information Technologies, Mechanics and Optics in June, 2013. The author would like to thank his supervisor, Mikhail Nikolayevich Kubensky, for his wise guidance. The author is also grateful to Dr. Matthew Greenberg (University of Calgary, Canada) and Robert Bateman (L'Institut Polytechnique des Sciences Avanc\'ees, France) for their thoughtful comments and the paper revision.

\end{document}